\NeedsTeXFormat{LaTeX2e} 

\documentclass{amsart}
\usepackage{amsfonts}
\usepackage{amsmath,amsthm}
\usepackage{amsfonts,amssymb}
\usepackage{mathrsfs}
\usepackage{enumerate}

\usepackage{multirow}
\usepackage{booktabs}

\hfuzz1pc 



\newtheorem{thm}{Theorem}[section]
\newtheorem{cor}[thm]{Corollary}
\newtheorem{lem}[thm]{Lemma}

\theoremstyle{remark}
\newtheorem{rem}[thm]{Remark}

\numberwithin{equation}{section}

\newcommand{\al}{\alpha}

\def\vz{\varepsilon}
\def\oz{\omega}
\def\lz{\lambda}
\def\Lz{\Lambda}

\def\dz{\delta}

\def\({\Bigl(}
\def \){ \Bigr)}

 \def\RR{{\mathbb R}}

\def\va{\varepsilon}

\begin{document}
\def\RR{\mathbb{R}}
\def\Exp{\text{Exp}}
\def\FF{\mathcal{F}_\al}

\title[] {On the power of standard information for tractability for
$L_2$-approximation in the randomized setting}

\author{Wanting Lu} \address{ School of Mathematical Sciences, Capital Normal
University, Beijing 100048,
 China.}
 \email{luwanting1234@163.com}

\author{Heping Wang} \address{ School of Mathematical Sciences, Capital Normal
University,
Beijing 100048,
 China.}
\email{wanghp@cnu.edu.cn}

\keywords{Tractability, Standard information, general linear
information, Randomized setting} \subjclass[2010]{41A63; 65C05;
65D15;  65Y20}

\begin{abstract} We study approximation of multivariate functions from a  separable
 Hilbert space in the randomized setting with
the error measured in the weighted $L_2$ norm. We consider
algorithms that use standard information $\Lz^{\rm std}$
consisting of function values or general linear information
$\Lz^{\rm all}$ consisting of arbitrary linear functionals. We use
 the weighted  least squares regression algorithm to  obtain the
 upper estimates of the
minimal randomized error using  $\Lz^{\rm std}$.    We investigate
the equivalences of various notions of algebraic
 and exponential tractability  for
$\Lz^{\rm std}$  and $\Lz^{\rm all}$  for the normalized or
absolute  error criterion. We show that in the randomized setting
for the normalized or absolute  error criterion, the power of
$\Lz^{\rm std}$  is the same as that of $\Lz^{\rm all}$  for all
notions of exponential  and algebraic tractability without any
condition. Specifically, we  solve four  Open Problems  98,
100-102 as posed by E.Novak and H.Wo\'zniakowski in the book:
Tractability of Multivariate Problems, Volume III: Standard
Information for Operators, EMS Tracts in Mathematics, Z\"urich,
2012.

\end{abstract}

\maketitle
\input amssym.def

\section{Introduction}
We study  multivariate approximation ${\rm APP}=\{{\rm
APP}_d\}_{d\in\Bbb N}$, where
$${{\rm APP}}_d: F_d\to G_d\ \ {\rm with}\ \  {\rm APP}_d\,
f=f $$ is the compact embedding operator,  $F_d$ is  a separable
Hilbert function space on $D_d$, $G_d$ is a weighted $L_2$ space
on $D_d$,  $D_d\subset \Bbb R^d$, and the dimension $d$ is large
or even huge. We consider algorithms that use finitely many
information evaluations. Here information evaluation means  linear
functional on $F_d$ (general linear information) or function value
at some point (standard information). We use $\Lz^{\rm all}$ and
$\Lz^{\rm std}$ to denote the extended class of all linear
functionals (not necessarily continuous) and the extended class of
all function values (defined only almost everywhere),
respectively.

 For a given error threshold
$\vz\in(0,1)$, the information complexity $n(\vz, d)$ is defined
to be the minimal number of information evaluations for which the
approximation error of some algorithm is at most $\vz$.
Tractability is aimed at studying how the information complexity
$n(\vz,d)$ depends on $\vz$ and $d$. There are two kinds of
tractability based on polynomial convergence and exponential
convergence. The algebraic tractability (ALG-tractability)
describes how the information complexity $n(\vz, d)$ behaves as a
function of $d$ and $\vz^{-1}$,  while the exponential
tractability (EXP-tractability)
 does as one of $d$ and $(1+\ln\vz^{-1})$. The existing notions of
tractability mainly include strong polynomial tractability (SPT),
polynomial tractability (PT), quasi-polynomial tractability (QPT),
weak tractability (WT), $(s, t)$-weak tractability ($(s,t)$-WT),
and uniform weak tractability (UWT).
 In
recent years   the study of algebraic  and
 exponential tractability has attracted much interest, and a great number of
interesting results
 have been obtained
  (see
\cite{NW1, NW2, NW3, W, GW, X1, S1, SiW,  DLPW,  DKPW, KW, PP, X2,
IKPW, CW, LX2, PPXD} and the references therein).

This paper is devoted to investigating  the equivalences of
various notions of algebraic and exponential tractability  for
 $\Lz^{\rm std}$ and $\Lz^{\rm all}$
 in the randomized  setting (see \cite[Chapter
22]{NW3}). The class $\Lz^{\rm std}$ is much smaller and much more
practical,  and is much more difficult to analyze than the class
$\Lz^{\rm all}$. Hence, it is very important to study the power of
$\Lz^{\rm std}$ compared to $\Lz^{\rm all}$. There are many paper
devoted to this field (see \cite{NW3, WW07, KWW, K19,  HWW, LZ,
X5, WW01, KWW1,  NW16, NW17, KU, KU2, NSU, HKNPU, HKNPU1, KS}).

In \cite{WW07, NW3} the authors obtained the equivalences of
ALG-SPT, ALG-PT, ALG-QPT, ALG-WT  in the randomized setting for
 $\Lz^{\rm std}$ and $\Lz^{\rm all}$ for the normalized error
criterion  without any condition. Meanwhile, for the absolute
error criterion under some conditions, the equivalences of
ALG-SPT, ALG-PT, ALG-QPT, ALG-WT were also obtained in \cite{NW3}.

In this paper, we obtain the remaining equivalences of all notions
of algebraic and exponential  tractability   in the randomized
setting for  $\Lz^{\rm std}$ and $\Lz^{\rm all}$  for the
normalized or absolute error criterion  without any condition. Our
results particularly imply that for the absolute error criterion
the imposed conditions  are not necessary. This solves Open
Problems 98, 101, 102 as posed by Novak and Wo\'zniakowski in
\cite{NW3}. We also give an almost complete solution to  Open
Problem 100 in \cite{NW3}.

This paper is organized as follows.  Section 2 contains 5
subsections. In Subsections 2.1 and 2.2 we introduce the
approximation problem in the worst case and randomized settings.
The various notions of algebraic and exponential tractability are
given in Subsection 2.3. Subsection 2.4 is devoted to give the
equivalences of tractability for $\Lz^{\rm all}$ for the absolute
or normalized error criterion in the worst case and randomized
settings.
   Our main results, Theorems
  2.2, 2.3, 2.5, and 2.6
 are stated  in Subsection 2.5. In  Section 3, we give the proofs of Theorems 2.2 and
2.3. After that, in Section 4,  we establish the equivalence
results for the notions of algebraic tractability. The equivalence
results for the notions of exponential tractability are proved in
Section 5.

\section{Preliminaries and Main Results}

\subsection{Deterministic worst case setting}\

 For $d\in \Bbb N$, let $F_d$ be a separable Hilbert space
of $d$-variate functions defined on $D_d\subset \Bbb R^d$,
$G_d=L_{2}(D_d,\rho_{d}(x)dx)$ be a weighted $L_2$ space, where
$D_d$ is a Borel measurable subset of $\mathbb{R}^{d}$ with
positive Lebesgue measure, $\rho_d$ is a probability density
function on $D_d$. We consider the multivariate approximation
problem ${\rm APP}=\{{\rm APP}_d\}_{d\in \Bbb N}$  in the
deterministic worst case setting which is defined via the compact
embedding operator
\begin{equation}\label{2.1} {{\rm APP}}_d: F_d\to G_d\ \ {\rm with}\ \  {\rm APP}_d\,
f=f.\end{equation} We approximate ${\rm APP}_df$ by algorithms
$A_{n,d}f$ of the form
\begin{equation}\label{2.2}A_{n,d}f=\phi
_{n,d}(L_1(f),L_2(f),\dots,L_n(f)),\end{equation} where
$L_1,L_2,\dots,L_n$ are general linear functionals  on $F_d$,
 and $\phi _{n,d}:\;\Bbb R^n\to
G_d$ is an arbitrary measurable mapping. The worst case
approximation error for the algorithm $A_{n,d}$ of the form
\eqref{2.2} is defined as
 $$e^{\rm wor}(A_{n,d})=\sup_{f\in F_d,\ \|f\|_{F_d}\le1}
\|{\rm APP}_d\,f-A_{n,d}f\|_{G_d}.$$ The $n$th minimal worst case
error  is defined by
$$e^{\rm wor}(n,d;\Lz^{\rm all})=\inf_{A_{n,d} \ {\rm with}\ L_i\in \Lz^{\rm all}}e^{\rm wor}(A_{n,d}),$$
where the infimum is taken over all algorithms of the form
\eqref{2.2}.

For $n=0$, we use $A_{0,d}=0$. We obtain  the so-called initial
error $e^{\rm wor}(0,d; \Lz^{\rm all})$, defined by
$$e^{\rm wor}(0,d;\Lz^{\rm all} )=\sup_{f\in F_d,\ \|f\|_{F_d}\le1}\|{\rm APP}_d\,
f\|_{G_d}.$$

From \cite{NW1, HNW} we know  that $e^{\rm wor}(n,d;\Lz^{\rm
all})$ depends on the eigenpairs
$\big\{(\lz_{k,d},e_{k,d})\big\}_{k=1}^\infty$ of the operator
$$W_d={\rm APP}_d^*\,{\rm APP}_d \colon F_d\to F_d
,$$where ${\rm APP}_d$ is given by \eqref{2.1}, ${\rm APP}_d^*$ is
the adjoint operator of ${\rm APP}_d$, and $$\lz_{1,d}\ge
\lz_{2,d}\ge \dots \lz_{n,d}\dots \ge 0.$$
 That is,   $\{e_{j,d}\}_{j\in \Bbb N}$ is an orthonormal basis in
 $F_d$, and
$$W_d\,e_{j,d}=\lambda_{j,d}\,e_{j,d}.$$

From \cite[p. 118]{NW1}
 we get   that the $n$th minimal worst case
error is
$$e^{\rm wor}(n,d; \Lz^{\rm all})=(\lz_{n+1,d})^{1/2},$$
and  it is achieved by the optimal algorithm
$$S_{n,d}^*f=\sum_{k=1}^n \langle f, e_{k,d} \rangle_{F_d}\,
e_{k,d},$$that is,
\begin{equation}\label{2.2-0}e^{\rm wor}(n,d; \Lz^{\rm all})=\sup_{f\in F_d,\ \|f\|_{F_d}\le 1}\|f-S_{n,d}^*f\|_{G_d}=(\lz_{n+1,d})^{1/2}.\end{equation}

Without loss of generality, we may assume that all the eigenvalues
are positive. We set $$\eta_{k,d}=\lz_{k,d}^{-1/2}e_{k,d},\ \ k\in
\Bbb N.$$ We remark that $\{ e_{k,d}\}$ is an orthonormal basis in
$F_d$,  $\{ \eta_{k,d}\}$ is an orthonormal system in $G_d$,  and
for $f\in F_d$,
$$ \langle f, e_{k,d} \rangle_{{F_d}}\, e_{k,d}= \langle f, \eta_{k,d}
\rangle_{{G_d}}\, \eta_{k,d},$$and
\begin{equation}\label{2.2-1}S_{n,d}^*f=\sum_{k=1}^n \langle f,
\eta_{k,d} \rangle_{G_d}\, \eta_{k,d}\end{equation}

\subsection{Randomized setting}\

In the randomized setting, we consider randomized algorithms
$A_{n,d}^\oz$ of the form
\begin{equation}\label{2.3}A_{n,d}^\oz(f)=\phi_{n,d,\oz}(L_{1,\omega}(f),\cdots,L_{n,\omega}(f)),\
L_{j,\oz}\in\Lambda,\ 1\leq j\leq n,\end{equation}
 where $\Lz\in \{\Lambda^{\rm
all},\Lambda^{\rm std}\}$,    $\phi_{n,d,\oz}$ and $L_{j,\oz}$
could be randomly selected according to some probability space
$(\Omega, \Sigma, \mathcal P)$, for any fixed $\oz\in \Omega$,
$A_n^\oz$ is a deterministic method with cardinality $n=n(f,\oz)$,
the number $n=n(f,\oz)$ may be randomized and adaptively depend on
the input, and the cardinality of $(A_{n,d}^\oz)$ is then  defined
by
$${\rm Card}(A_{n,d}^\oz)=\sup_{f\in F_d,\ \|f\|_{F_d}\le 1}\mathbb E_\oz n(f,\oz).$$
The randomized approximation  error for the algorithm
$A_{n,d}^\oz$  of the form \eqref{2.3} is defined as
$$e^{\rm ran}(A_{n,d}^\oz)=\sup_{f\in F_d,\ \|f\|_{F_d}\leq 1 }\Big(\mathbb{E}_\oz\big\|{\rm APP}_d\,f-A_{n,d}^\oz(f)\big\|^{2}_{G_d}\Big)^{1/2}.$$
The $n$th minimal randomized  error for $\Lz\in\{\Lz^{\rm all},
\Lz^{\rm std}\}$ is defined by
$$e^{\rm ran}(n,d;\Lz)=\inf_{A_{n,d}^\oz \ {\rm with}\ L_{i,\oz}\in \Lz}e^{\rm ran}(A_{n,d}^\oz),$$
where the infimum is taken over all randomized algorithms
$A_{n,d}^\oz$ of the form \eqref{2.3} with ${\rm
Card}(A_{n,d}^\oz)\le  n$.

For $n=0$, we use $A_{0,d}^\oz=0$. We have
$$e^{\rm ran}(0,d;\Lz )=e^{\rm wor}(0,d;\Lz^{\rm all} )=(\lz_{1,d})^{1/2}.$$

There are many papers devoted to studying randomized approximation
and  relations of $e^{\rm ran}(n,d;\Lz)$ and $e^{\rm
wor}(n,d;\Lz)$
 for $\Lz\in\{\Lz^{\rm all}, \Lz^{\rm
std}\}$ (see \cite{BKN, FD1, FD2,  H92, K19, K18, KWW, M91, N88,
N92, NW1, NW3, TWW, WW07}).

This paper is devoted to discussing the equivalence of
tractability for $\Lz^{\rm all}$ and $\Lz^{\rm std}$ in the
randomized settings.  For $\Lz^{\rm std}$ the authors in
\cite{WW07, NW3, K19} used simplified randomized algorithms of the
form
\begin{equation}\label{2.4}A_{n,\vec t}(f)=\sum_{j=1}^n
f(t_j)g_{j,\vec t},\end{equation} where $\vec t=[t_1,\dots,t_n]$
for some random points $t_1,\dots, t_n$ from $D_d$, which are
independent, and each $t_j$ is distributed according to some
probability. The functions $g_{j,\vec t}\in G_d$ may depend on the
selected points $t_j$'s but are independent of $f$. For any $f$,
we view $A_{n,\cdot}(f)$ as a random process, and $A_{n,\vec
t}(f)$ as its specific realization.

We stress that algorithms of the form \eqref{2.4} belong to a
restricted class of all randomized algorithms, which are called
randomized linear algorithms. Indeed, we assume that $n$ is not
randomized, and for a fixed $\vec t$ we consider only linear
algorithms in $f(t_j)$. In this paper we also consider algorithms
of the form \eqref{2.4}. However, in \cite{WW07, NW3, K19} $t_j$'s
are assumed   to be independent, while in this paper we only
assume that $\vec t$ is distributed according to some probability,
and do not assume that $t_j$'s are independent.

The  information complexity  can be studied using either the
absolute error criterion (ABS) or the normalized error criterion
(NOR). For $\diamond\in\{{\rm wor,ran}\}$, $\star\in\{{\rm
ABS,\,NOR}\}$, and $\Lz\in \{\Lz^{\rm all}, \Lz^{\rm std}\}$, we
define the  information complexity $n^{\diamond, \star}(\va
,d;\Lz)$  as
\begin{equation} n^{\diamond, \star}(\va
,d;\Lz)=\inf\{n \ \big|\  e^{\diamond}(n,d;\Lz)\le \vz {\rm
CRI}_d\}, \end{equation} where
\begin{equation*}
{\rm CRI}_d=\left\{\begin{split}
 &\ \ 1, \; \quad\qquad\text{for $\star$=ABS,} \\
 &e^{\diamond}(0,d,\Lz), \text{ for $\star$=NOR}
\end{split}\right. \ \ =\ \ \left\{\begin{split}
 &\ 1, \; \qquad\ \ \ \qquad\text{ for $\star$=ABS,} \\
 &\big(\lz_{1,j}\big)^{1/2},\ \qquad\text{ for $\star$=NOR.}
\end{split}\right.
\end{equation*}
We remark that $$e^{\rm wor}(0,d,\Lz^{\rm all}) =e^{\rm
ran}(0,d,\Lz^{\rm all})=e^{\rm ran}(0,d,\Lz^{\rm std}).$$ Since
$\Lz^{\rm std}\subset \Lz^{\rm all},$ we get
\begin{equation*}e^{\rm ran}(n,d;\Lz^{\rm all})\le e^{\rm ran}(n,d;\Lz^{\rm std}).\end{equation*}
It follows that for $\star\in\{{\rm ABS,\,NOR}\}$,
\begin{equation} \label{2.4-1}n^{{\rm ran}, \star}(\va
,d;\Lz^{\rm all})\le n^{{\rm ran}, \star}(\va ,d;\Lz^{\rm std}).
\end{equation}

\subsection{Notions of tractability}\

In this subsection we briefly recall the various tractability
notions. Let ${\rm APP}= \{{\rm APP}_d\}_{d\in\Bbb N}$,
$\diamond\in\{{\rm wor,ran}\}$, $\star\in\{{\rm ABS,\,NOR}\}$, and
$\Lz\in \{\Lz^{\rm all}, \Lz^{\rm std}\}$. In the $\diamond$
setting for the class $\Lambda$, and for error criterion $\star$,
we say that ${\rm APP}$ is

$\bullet$ Algebraic strongly polynomially tractable (ALG-SPT) if
there exist $ C>0$ and non-negative number $p$ such that
\begin{equation}\label{2.7}n^{\diamond, \star}(\va ,d;\Lz)\leq C\varepsilon^{-p},\
\text{for all}\ \varepsilon\in(0,1).\end{equation}The exponent
ALG-$p^{\diamond, \star}(\Lz)$ of ALG-SPT is defined as the
infimum of $p$ for which \eqref{2.7} holds;

$\bullet$ Algebraic polynomially tractable (ALG-PT)  if there
exist $ C>0$ and non-negative numbers $p,q$ such that
$$n^{\diamond, \star}(\va
,d;\Lz)\leq Cd^{q}\varepsilon^{-p},\ \text{for all}\
d\in\mathbb{N},\ \varepsilon\in(0,1);$$

$\bullet$ Algebraic quasi-polynomially tractable (ALG-QPT) if
there exist $ C>0$ and non-negative number $t$ such that
\begin{equation}\label{2.8}n^{\diamond, \star}(\va
,d;\Lz)\leq C \exp(t(1+\ln{d})(1+\ln{\varepsilon^{-1}})),\
\text{for all}\ d\in\mathbb{N},\
\varepsilon\in(0,1).\end{equation}The exponent ALG-$t^{\diamond,
\star}(\Lz)$ of ALG-QPT  is defined as the infimum of $t$ for
which \eqref{2.8} holds;

$\bullet$ Algebraic uniformly weakly tractable (ALG-UWT)  if
$$\lim_{\varepsilon^{-1}+d\rightarrow\infty}\frac{\ln n^{\diamond, \star}(\va
,d;\Lz)}{\varepsilon^{-\alpha}+d^{\beta}}=0,\ \text{for all}\
\alpha, \beta>0;$$

$\bullet$ Algebraic weakly tractable (ALG-WT) if
$$\lim_{\varepsilon^{-1}+d\rightarrow\infty}\frac{\ln n^{\diamond, \star}(\va
,d;\Lz)}{\varepsilon^{-1}+d}=0;$$

$\bullet$ Algebraic $(s,t)$-weakly tractable (ALG-$(s,t)$-WT) for
fixed $s, t>0$ if
 $$\lim_{\varepsilon^{-1}+d\rightarrow\infty}\frac{\ln n^{\diamond, \star}(\va
,d;\Lz)}{\varepsilon^{-s}+d^{t}}=0.$$

Clearly, ALG-$(1,1)$-WT is the same as ALG-WT. If ${\rm APP}$ is
not ALG-WT, then ${\rm APP}$  is called  intractable.

If
 the $n$th  minimal error    decays faster
than any polynomial and is exponentially convergent, then we
should study  tractability with $\vz^{-1}$ being replaced by
$(1+\ln \frac 1{\vz})$, which is called exponential tractability.
Recently, there have been many papers studying exponential
tractability (see \cite{DLPW, DKPW, X3, PPXD, KW, IKPW, CW, LX2}).

In the definitions of ALG-SPT, ALG-PT, ALG-QPT, ALG-UWT, ALG-WT,
and ALG-$(s,t)$-WT, if we replace $\frac1{\vz}$ by $(1+\ln \frac
1{\vz})$, we get the definitions of \emph{exponential strong
polynomial tractability} (EXP-SPT), \emph{exponential polynomial
tractability} (EXP-PT), \emph{exponential quasi-polynomial
tractability} (EXP-QPT), \emph{exponential uniform weak
tractability}
 (EXP-UWT), \emph{exponential weak tractability}
 (EXP-WT), and \emph{exponential $(s,t)$-weak tractability}
 (EXP-$(s,t)$-WT), respectively.  We now give the above notions of exponential tractability in
 detail.

Let ${\rm APP}= \{{\rm APP}_d\}_{d\in\Bbb N}$, $\diamond\in\{{\rm
wor,ran}\}$, $\star\in\{{\rm ABS,\,NOR}\}$, and $\Lz\in \{\Lz^{\rm
all}, \Lz^{\rm std}\}$. In the $\diamond$ setting for the class
$\Lambda$, and for error criterion $\star$, we say that ${\rm
APP}$ is

$\bullet$ Exponential strongly polynomially tractable (EXP-SPT) if
there exist $ C>0$ and non-negative number $p$ such that
\begin{equation}\label{2.9}n^{\diamond, \star}(\va ,d;\Lz)\leq C(\ln\varepsilon^{-1}+1)^{p},\
\text{for all}\ \varepsilon\in(0,1).\end{equation}The exponent
EXP-$p^{\diamond, \star}(\Lz)$ of EXP-SPT is defined as the
infimum of $p$ for which \eqref{2.9} holds;

$\bullet$ Exponential  polynomially tractable (EXP-PT)  if there
exist $C>0$ and non-negative numbers $p,q$ such that
$$n^{\diamond, \star}(\va
,d;\Lz)\leq Cd^{q}(\ln\varepsilon^{-1}+1)^{p},\ \text{for all}\
d\in\mathbb{N},\ \varepsilon\in(0,1);$$

$\bullet$ Exponential  quasi-polynomially tractable (EXP-QPT) if
there exist $C>0$ and non-negative number $t$ such that
\begin{equation}\label{2.10}n^{\diamond, \star}(\va
,d;\Lz)\leq C \exp(t(1+\ln{d})(1+\ln(\ln\varepsilon^{-1}+1))),\
\text{for all}\ d\in\mathbb{N},\
\varepsilon\in(0,1).\end{equation}The exponent EXP-$t^{\diamond,
\star}(\Lz)$ of EXP-QPT  is defined as the infimum of $t$ for
which \eqref{2.10} holds;

$\bullet$ Exponential  uniformly weakly tractable (EXP-UWT)  if
$$\lim_{\varepsilon^{-1}+d\rightarrow\infty}\frac{\ln n^{\diamond, \star}(\va
,d;\Lz)}{(1+\ln\varepsilon^{-1})^{\alpha}+d^{\beta}}=0,\ \text{for
all}\ \alpha, \beta>0;$$

$\bullet$ Exponential  weakly tractable (EXP-WT) if
$$\lim_{\varepsilon^{-1}+d\rightarrow\infty}\frac{\ln n^{\diamond, \star}(\va
,d;\Lz)}{1+\ln\varepsilon^{-1}+d}=0;$$

$\bullet$ Exponential  $(s,t)$-weakly tractable (EXP-$(s,t)$-WT)
for fixed $s,t>0$ if
 $$\lim_{\varepsilon^{-1}+d\rightarrow\infty}\frac{\ln n^{\diamond, \star}(\va
,d;\Lz)}{(1+\ln\varepsilon^{-1})^{s}+d^{t}}=0.$$

\subsection{Equivalences of tractability for $\Lz^{\rm all}$ in
the worst case and randomized settings}\

In this subsection, we introduce the equivalences of tractability
for $\Lz^{\rm all}$ in the worst case and randomized settings. It
follows from \cite{N92}, \cite[p. 284]{NW1} that
$$\frac12 e^{\rm wor}(4n-1,d;\Lz^{\rm all})\le e^{\rm ran}(n,d;\Lz^{\rm all})\le e^{\rm wor}(n,d;\Lz^{\rm
all}),
$$which means that for $\star \in \{{\rm ABS, NOR}\}$ and  $n^{\rm
ran,\star}(\vz,d;\Lz^{\rm all})\ge1$,
\begin{equation}\label{2.014}\frac{1}{4}\left(n^{\rm wor,\star}(2\varepsilon,d;\Lambda^{\rm all})+1\right)\leq n^{\rm ran,\star}(\varepsilon,d;\Lambda^{\rm all})
\leq n^{\rm wor,\star}(\varepsilon,d;\Lambda^{\rm
all}).\end{equation}

From \eqref{2.014} we  get  the equivalences of tractability for
$\Lz^{\rm all}$ in the worst case and randomized settings. Indeed,
for the absolute or normalized error criterion,  \cite[Corollaries
22.1 and 22.2]{NW3} shows  the equivalences of ALG-SPT, ALG-PT,
ALG-QPT, ALG-WT  for  $\Lz^{\rm all}$   in the worst case and
randomized settings, and that  the exponents of ALG-SPT and
ALG-QPT in the worst case and randomized settings are also same.

Using \eqref{2.014} and the same method as in the proof of
\cite[Corollaries 22.1 and 22.2]{NW3}, for the absolute or
normalized error criterion we obtain  the equivalences of ALG-UWT,
ALG-$(s,t)$-WT, EXP-SPT, EXP-PT, EXP-QPT, EXP-WT, EXP-UWT,
EXP-$(s,t)$-WT, for $\Lz^{\rm all}$   in the worst case and
randomized settings, and that  the exponents of EXP-SPT and
EXP-QPT in the worst case and randomized settings are also same.

We remark that in showing  EXP-$t^{\rm ran,\star}(\Lz^{\rm all})=$
EXP-$t^{\rm wor,\star}(\Lz^{\rm all})$, we use the following
inequalities (see \cite[p. 43]{NW3}):  for $\dz\in(0,1)$ and
$n^{\rm ran,\star}(\vz,d;\Lz^{\rm all})\ge1$,
\begin{equation*}
\dz^2\Big(n^{\rm
wor,\star}(\frac{\varepsilon}{1-\dz},d;\Lambda^{\rm
all})+1\Big)\leq n^{\rm ran,\star}(\varepsilon,d;\Lambda^{\rm
all}) \leq n^{\rm wor,\star}(\varepsilon,d;\Lambda^{\rm all}),
\end{equation*}instead of \eqref{2.014}. See the  proof of
Theorem 5.4.

We summarize these properties in the next corollary.

\begin{cor}Consider the approximation problem ${\rm APP}= \{{\rm APP}_d\}_{d\in\Bbb N}$ for the absolute
or normalized error criterion in the randomized and worst case
settings for  $\Lz^{\rm all}$. Then \vskip 2mm

$\bullet$ $\rm ALG$-$\rm SPT$, $\rm ALG$-$\rm PT$, $\rm ALG$-$\rm
QPT$,   $\rm ALG$-$\rm UWT$, $\rm ALG$-$\rm WT$,    $\rm
ALG$-$(s,t)$-$\rm WT$ in the randomized setting is equivalent to
$\rm ALG$-$\rm SPT$, $\rm ALG$-$\rm PT$, $\rm ALG$-$\rm QPT$, $\rm
ALG$-$\rm UWT$, $\rm ALG$-$\rm WT$,  $\rm ALG$-$(s,t)$-$\rm WT$ in
the worst case setting; \vskip 2mm

$\bullet$ $\rm EXP$-$\rm SPT$, $\rm EXP$-$\rm PT$, $\rm EXP$-$\rm
QPT$, $\rm EXP$-$\rm UWT$, $\rm EXP$-$\rm WT$, $\rm
EXP$-$(s,t)$-$\rm WT$ in the randomized setting is equivalent to
$\rm EXP$-$\rm SPT$, $\rm EXP$-$\rm PT$, $\rm EXP$-$\rm QPT$, $\rm
EXP$-$\rm UWT$, $\rm EXP$-$\rm WT$, $\rm EXP$-$(s,t)$-$\rm WT$,
$\rm ALG$-$\rm UWT$, $\rm ALG$-$(s,t)$-$\rm WT$ in the worst case
setting; \vskip 2mm

$\bullet$  the exponents of {\rm SPT} and {\rm QPT} are the same
in the two settings, i.e., for $\star \in \{{\rm ABS, NOR}\}$,
\begin{align*}{\rm ALG}\!-\!p^{\rm wor,\star}(\Lz^{\rm all}) &= {\rm ALG}\!-\!p^{\rm ran,\star}(\Lz^{\rm all}),\\
{\rm ALG}\!-\!t^{\rm wor,\star}(\Lz^{\rm all}) &= {\rm
ALG}\!-\!t^{\rm ran,\star}(\Lz^{\rm all}), \\
{\rm EXP}\!-\!p^{\rm wor,\star}(\Lz^{\rm all}) &= {\rm EXP}\!-\!p^{\rm ran,\star}(\Lz^{\rm all}),\\
{\rm EXP}\!-\!t^{\rm wor,\star}(\Lz^{\rm all}) &= {\rm
EXP}\!-\!t^{\rm ran,\star}(\Lz^{\rm all}). \end{align*}

\end{cor}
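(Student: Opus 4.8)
The plan is to derive every clause from the two–sided comparison \eqref{2.014} together with its $\dz$–refinement, since these reduce each statement about the randomized setting to the corresponding statement in the worst case setting and conversely. First I would dispose of the implication ``worst case tractability $\Rightarrow$ randomized tractability'': the right–hand inequality $n^{\rm ran,\star}(\vz,d;\Lz^{\rm all})\le n^{\rm wor,\star}(\vz,d;\Lz^{\rm all})$, valid for both $\star\in\{\rm ABS,NOR\}$ because the initial errors coincide ($e^{\rm ran}(0,d;\Lz^{\rm all})=e^{\rm wor}(0,d;\Lz^{\rm all})=(\lz_{1,d})^{1/2}$, so the criteria $\rm CRI_d$ agree), shows at once that any upper bound on $n^{\rm wor,\star}$ is inherited by $n^{\rm ran,\star}$. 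Hence each of ALG-SPT, ALG-PT, ALG-QPT, ALG-UWT, ALG-WT, ALG-$(s,t)$-WT and their EXP analogues passes verbatim from the worst case to the randomized setting, and in the SPT and QPT cases the worst case exponent bounds the randomized one.

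For the reverse direction I would use the left–hand inequality $n^{\rm wor,\star}(2\vz,d;\Lz^{\rm all})\le 4\,n^{\rm ran,\star}(\vz,d;\Lz^{\rm all})$, the only features to be tracked being the constant $4$ and the halving of the threshold. For the polynomial notions this is immediate: if $n^{\rm ran,\star}(\vz,d;\Lz^{\rm all})\le Cd^{q}\vz^{-p}$, then putting $\vz\mapsto\vz/2$ gives $n^{\rm wor,\star}(\vz,d;\Lz^{\rm all})\le 4\cdot 2^{p}Cd^{q}\vz^{-p}$, so ALG-PT and ALG-SPT transfer with the \emph{same} exponents $p,q$, whence ${\rm ALG}\!-\!p^{\rm wor,\star}(\Lz^{\rm all})={\rm ALG}\!-\!p^{\rm ran,\star}(\Lz^{\rm all})$. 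For the weak notions I would take logarithms: since $\ln n^{\rm wor,\star}(2\vz,d;\Lz^{\rm all})\le \ln4+\ln n^{\rm ran,\star}(\vz,d;\Lz^{\rm all})$, and since the threshold change alters the denominators $\vz^{-\al}+d^{\b}$ only by a bounded factor (for the EXP notions it amounts to an additive constant $\ln2$ inside $1+\ln\vz^{-1}$, again a bounded factor after raising to the power $\al$), the defining limits are preserved. Thus ALG-UWT, ALG-WT, ALG-$(s,t)$-WT and all six EXP notions transfer in both directions.

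The genuinely delicate point, and the one I expect to be the main obstacle, is the equality of the QPT exponents, ${\rm ALG}\!-\!t^{\rm wor,\star}={\rm ALG}\!-\!t^{\rm ran,\star}$ and ${\rm EXP}\!-\!t^{\rm wor,\star}={\rm EXP}\!-\!t^{\rm ran,\star}$: here the crude halving in \eqref{2.014} is too lossy, since feeding $\vz\mapsto\vz/2$ into a bound $C\exp\!\bigl(t(1+\ln d)(1+\ln\vz^{-1})\bigr)$ creates the extra factor $\exp\!\bigl(t(1+\ln d)\ln2\bigr)$, whose $(1+\ln d)$–weight is an uncontrolled power of $d$ that would inflate $t$. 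To avoid this I would instead invoke the sharper inequality $\dz^{2}\bigl(n^{\rm wor,\star}(\tfrac{\vz}{1-\dz},d;\Lz^{\rm all})+1\bigr)\le n^{\rm ran,\star}(\vz,d;\Lz^{\rm all})$ recorded before the corollary: running the argument at threshold $\vz/(1-\dz)$ replaces the fatal $\ln2$ by $\ln(1-\dz)^{-1}$, and given any $t''>t$ one chooses $\dz$ so small that $t\ln(1-\dz)^{-1}\le t''-t$, which (using $1+\ln\vz^{-1}\ge1$) lets the perturbed quasi-polynomial bound be absorbed into one with exponent $t''$ at the harmless cost of the constant $\dz^{-2}$; letting $t''\downarrow t$ gives the equality, exactly as in the proof of Theorem~5.4. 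The same scheme, with $1+\ln\vz^{-1}$ replaced by $1+\ln(\ln\vz^{-1}+1)$, settles EXP-QPT, the only additional care being the bookkeeping of the iterated logarithm. Finally, the coincidences recorded in the second bullet with ALG-UWT and ALG-$(s,t)$-WT in the worst case setting I would take from the worst case equivalences established in \cite{NW3} and summarized in the discussion preceding the corollary, combined with the transfer just described.
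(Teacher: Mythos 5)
Your proposal is correct and takes essentially the same route as the paper: the forward direction is read off from the right-hand inequality in \eqref{2.014}, the converse from the left-hand inequality with the threshold rescaling absorbed (by taking logarithms for the weak notions), and the equality of the QPT exponents is obtained from the $\dz$-refined inequality with $\dz\to 0$, exactly as the paper indicates by pointing to the proof of Theorem 5.4. The only cosmetic difference is that the paper delegates the ALG cases (including the ALG-QPT exponent) to \cite[Corollaries 22.1 and 22.2]{NW3} and invokes the $\dz$-inequality explicitly only for EXP-QPT, whereas you run the same $\dz$-argument for both, which is the identical idea.
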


\subsection{Main results}\

We shall give  main results of this paper in this subsection. The
first important progress about the relation between $e^{\rm
ran}(n,d;\Lz^{\rm std})$ and $e^{\rm wor}(n,d;\Lz^{\rm all})$ was
obtained by Wasilkowski and Wo\'zniakowski in \cite{WW07} by
constructing iterated  Monte Carlo methods. They showed that the
powers of $e^{\rm ran}(n,d;\Lz^{\rm std})$ and $e^{\rm
wor}(n,d;\Lz^{\rm all})$ are same, and obtained the equivalences
of ALG-SPT and ALG-PT for  $\Lz^{\rm all}$ and $\Lz^{\rm std}$ for
the normalized error criterion in the randomized setting. Novak
and Wo\'zniakowski in \cite{NW3} and Krieg in \cite{K19} refined
the above randomized algorithms and showed that $e^{\rm
ran}(n,d;\Lz^{\rm std})$ is asymptotically of the same order as
$e^{\rm wor}(n,d;\Lz^{\rm all})$ given that $e^{\rm
wor}(n,d;\Lz^{\rm all})$ is regularly decreasing. However, the
obtained  relations are heavily dependent of the initial error,
and are not sharp if $e^{\rm wor}(n,d;\Lz^{\rm all})$ is
exponentially convergent.

If  nodes ${\rm X} = (x^1,\dots, x^n)\in D_d^n$ are drawn
independently and identically distributed according to a
probability measure, then the  samples on the nodes ${\rm X}$ is
called the random information (see \cite{HKNPU, HKNPU1, KS}).
Using  random information and the  least squares method we can
obtain the relation between $e^{\rm ran}(n,d;\Lz^{\rm std})$ and
$e^{\rm wor}(n,d;\Lz^{\rm all})$ (see \cite{CDL, CM}). The authors
in \cite{KUV} used  random information satisfying some condition
and the least squares method to obtain an inequality  between
$e^{\rm ran}(n,d;\Lz^{\rm std})$ and $e^{\rm wor}(n,d;\Lz^{\rm
all})$  (see \cite[Theorem 6.1]{KUV}). They remarked in
\cite[Remark 6.3]{KUV} that using the weighed least squares method
can improve the above inequality.

In this paper we  use the method proposed in \cite[Remark
6.3]{KUV}, i.e.,  combining the proof of   \cite[Theorem 6.1]{KUV}
with the weighed least squares method used in \cite{CM}, to get an
improved inequality between $e^{\rm ran}(n,d;\Lz^{\rm std})$ and
$e^{\rm wor}(n,d;\Lz^{\rm all})$. See the following theorem.
Compared with the results in \cite{NW3, K19}, our inequality does
not depend on the initial error, and  are almost sharp if $e^{\rm
wor}(n,d;\Lz^{\rm all})$ is exponentially convergent. However, if
$e^{\rm wor}(n,d;\Lz^{\rm all})$ is regularly decreasing, then by
our inequality we can only obtain that $e^{\rm ran}(n,d;\Lz^{\rm
std})$ is  at most asymptotically of the  order of $e^{\rm
wor}(m,d;\Lz^{\rm all})$, where $n$ is at least of order $m \ln
m$.

\begin{thm}
Let $\delta\in(0,1)$,  $m,n\in\mathbb{N}$ be such that
$$m=\left\lfloor\frac{n}{48(\sqrt{2}\ln(2n)-\ln{\delta})}\right\rfloor.$$
Then we have
\begin{equation}\label{2.12}e^{\rm ran}(n,d;\Lz^{\rm std})\le\Big(1+\frac{4m}{n}\Big)^{\frac12}\frac{1}{\sqrt{1-\delta}}\,e^{\rm wor}(m,d;\Lz^{\rm all}),
\end{equation}where $\lfloor x\rfloor$ denotes the largest
integer not exceeding $x$.
\end{thm}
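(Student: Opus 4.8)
The plan is to build an explicit randomized linear algorithm of the form \eqref{2.4} that uses exactly $n$ function values and realizes the claimed bound; since $e^{\rm ran}(n,d;\Lz^{\rm std})$ is the infimum over all such algorithms, this yields \eqref{2.12}. I would approximate $f$ inside the $m$-dimensional space $V_m=\operatorname{span}\{\eta_{1,d},\dots,\eta_{m,d}\}$ spanned by the top eigenfunctions, recovering the coefficients by a \emph{weighted} least squares fit to the sampled values, as suggested in \cite[Remark 6.3]{KUV}. The orthogonal $G_d$-projection onto $V_m$ is precisely $S_{m,d}^*$ from \eqref{2.2-1}, and by \eqref{2.2-0} its worst-case residual is controlled by $e^{\rm wor}(m,d;\Lz^{\rm all})$: for $\|f\|_{F_d}\le1$ one has $\|f-S_{m,d}^*f\|_{G_d}^2=\sum_{k>m}\lambda_{k,d}|\langle f,e_{k,d}\rangle_{F_d}|^2\le\lambda_{m+1,d}=\big(e^{\rm wor}(m,d;\Lz^{\rm all})\big)^2$. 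The whole game is then to show that the random sampling inflates this residual by no more than the factor $\big(1+\tfrac{4m}{n}\big)\tfrac{1}{1-\delta}$.

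First I would fix the optimal sampling density $d\nu_m(x)=\frac1m\sum_{k=1}^m|\eta_{k,d}(x)|^2\,\rho_d(x)\,dx$, which is a probability measure because each $\eta_{k,d}$ has unit $G_d$-norm, together with the importance weight $w_m(x)=m\big/\sum_{k=1}^m|\eta_{k,d}(x)|^2$, so that $\int g\bar h\,w_m\,d\nu_m=\langle g,h\rangle_{G_d}$. Drawing points $x^1,\dots,x^n$ from $\nu_m$ and forming the empirical Gram matrix $\mathbf G=\big(\tfrac1n\sum_{j}w_m(x^j)\eta_{k,d}(x^j)\overline{\eta_{l,d}(x^j)}\big)_{k,l=1}^m$, I get $\mathbb E\,\mathbf G=I_m$, and each summand is a rank-one positive semidefinite matrix of norm $w_m(x^j)\sum_k|\eta_{k,d}(x^j)|^2=m$ by the very choice of $w_m$. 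A matrix Chernoff bound then controls the lower tail of $\lambda_{\min}(\mathbf G)$; calibrating it on the event $\mathcal G=\{\lambda_{\min}(\mathbf G)\ge\tfrac12\}$ is exactly where the constants $48$ and $\sqrt2$ and the requirement $m=\lfloor n/(48(\sqrt2\ln(2n)-\ln\delta))\rfloor$ enter, and it yields $\mathbb P(\mathcal G)\ge1-\delta$.

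Next I would analyze the weighted least squares estimator $g^\ast=\sum_k c_k\eta_{k,d}$, where $\mathbf G c=b$ and $b_k=\tfrac1n\sum_j w_m(x^j)f(x^j)\overline{\eta_{k,d}(x^j)}$. Splitting $f=S_{m,d}^*f+r$ with $r\perp_{G_d}V_m$, the normal equations give $c=a+\mathbf G^{-1}b(r)$, where $a$ collects the coefficients of $S_{m,d}^*f$ and $b(r)$ is the data vector of the residual; hence, by Pythagoras in $G_d$, $\|f-g^\ast\|_{G_d}^2=\|r\|_{G_d}^2+\|\mathbf G^{-1}b(r)\|_{\ell_2}^2$. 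Since $\mathbb E\,b(r)=0$ (as $r\perp V_m$) and again $w_m(x)\sum_k|\eta_{k,d}(x)|^2=m$, a direct variance computation gives $\mathbb E\|b(r)\|_{\ell_2}^2\le\tfrac mn\|r\|_{G_d}^2$, while on $\mathcal G$ one has $\|\mathbf G^{-1}\|\le2$. To keep the cardinality equal to $n$ while discarding the bad event I would let $\vec t$ be distributed as $\nu_m^{\otimes n}$ \emph{conditioned on} $\mathcal G$ --- this is legitimate precisely because, unlike \cite{WW07,NW3,K19}, we do not require the $t_j$ to be independent. Taking the conditional expectation then yields $\mathbb E\big[\|f-g^\ast\|_{G_d}^2\mid\mathcal G\big]\le\|r\|_{G_d}^2+\tfrac{4m}{n(1-\delta)}\|r\|_{G_d}^2\le\tfrac{1}{1-\delta}\big(1+\tfrac{4m}{n}\big)\|r\|_{G_d}^2$, and combining with the residual bound from the first paragraph and taking square roots gives \eqref{2.12}.

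The main obstacle will be the second step: pinning down the exact form of the matrix concentration inequality so that the stated relation between $m$ and $n$ delivers $\mathbb P(\mathcal G)\ge1-\delta$ with the specific constants $48$ and $\sqrt2$, rather than merely with unspecified absolute constants. A secondary point requiring care is the conditioning on $\mathcal G$: one must verify that restricting the design distribution to $\mathcal G$ keeps the algorithm of the admissible form \eqref{2.4} with cardinality exactly $n$, and that the division by $\mathbb P(\mathcal G)\ge1-\delta$ is precisely what produces the $1/(1-\delta)$ factor, while the $1+4m/n$ factor comes from the variance estimate for $b(r)$.
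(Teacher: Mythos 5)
Your proposal is correct and is essentially the paper's own proof: the same sampling density (your $\nu_m$ is the paper's $\mu_{m,d}$, your weight $w_m$ is $1/h_{m,d}$), the same weighted least squares estimator with your Gram matrix $\mathbf{G}$ equal to the paper's $\widetilde H_m$, the same orthogonal decomposition $\|f-g^\ast\|_{G_d}^2=\|r\|_{G_d}^2+\|\mathbf{G}^{-1}b(r)\|_{\ell_2}^2$, the same variance computation giving $\mathbb{E}\|b(r)\|_{\ell_2}^2\le \frac{m}{n}\|r\|_{G_d}^2$, and the same conditioning of the design on the spectral event to keep cardinality $n$ and produce the factor $\frac{1}{1-\delta}$. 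The one step you flag as the main obstacle --- the matrix concentration bound with the specific constants $48$ and $\sqrt2$ --- is precisely what the paper imports by citation from \cite[Propositions 5.1 and 3.1]{KUV}, namely $\mathbb{P}\bigl(\|\widetilde H_m-I_m\|>t\bigr)\le(2n)^{\sqrt2}\exp\bigl(-\tfrac{nt^2}{12m}\bigr)$, which with $t=1/2$ and the stated choice of $m$ gives $\mathbb{P}(\mathcal{G})\ge1-\delta$, so your outline matches the paper's argument step for step.
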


Based on Theorem 2.2, we obtain two relations between the
information complexities $n^{\rm
ran,\star}(\varepsilon,d;\Lambda^{\rm std})$ and $n^{\rm
wor,\star}(\varepsilon,d;\Lambda^{\rm all})$ for $\star\in\{{\rm
ABS,\,NOR}\}$.
\begin{thm}For $\star\in\{{\rm ABS,\,NOR}\}$, we have \begin{align}\label{2.14}n^{\rm
ran,\star}(\varepsilon,d;\Lambda^{\rm std})\le  96\sqrt2
\Big(n^{\rm wor,\star}(\frac\varepsilon4,d;\Lambda^{\rm
all})+1\Big) \Big(\ln\big(n^{\rm
wor,\star}(\frac\varepsilon4,d;\Lambda^{\rm
all})+1\big)+\ln(192\sqrt{2})\Big). \end{align}Furthermore, for
sufficiently small $\delta>0$, we have
\begin{align}\label{2.15}
n^{\rm ran,\star}(\varepsilon,d;\Lambda^{\rm std}) \le
48\Big(4\big(\ln48&+\ln\ln{\frac{1}{\delta}}+ \ln\big(n^{\rm
wor,\star}(\frac{\varepsilon}{A_\delta},d;\Lambda^{\rm
all})+1\big)\big)\\ &+\ln{\frac{1}{\delta}}\Big) \big(n^{\rm
wor,\star}(\frac{\varepsilon}{A_\delta},d;\Lambda^{\rm
all})+1\big),\nonumber
\end{align}where $A_{\delta}:=\Big(1+\frac{1}{12\ln{\frac{1}{\delta}}}\Big)^{\frac12}\frac{1}{\sqrt{1-\delta}}$.
\end{thm}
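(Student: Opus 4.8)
The plan is to convert the error inequality of Theorem~2.2 into the two complexity bounds by the standard device: choose the number of samples $n$ so large that the worst-case index $m$ produced by the floor formula already exceeds the target worst-case complexity, while keeping the multiplicative constant in \eqref{2.12} under control. Fix $\varepsilon$ and $d$, let $c$ denote the scaling used in the worst-case complexity, and set $\bar m=n^{\rm wor,\star}(\varepsilon/c,d;\Lz^{\rm all})$ and $M=\bar m+1$; if $\bar m=\infty$ the claimed right-hand sides are infinite and there is nothing to prove, so we may assume $\bar m<\infty$ and hence $e^{\rm wor}(\bar m,d;\Lz^{\rm all})\le(\varepsilon/c)\,{\rm CRI}_d$ by definition. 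The first preparatory step bounds the prefactor in \eqref{2.12}: from $m=\lfloor n/(48(\sqrt2\ln(2n)-\ln\delta))\rfloor$ we get $4m/n\le\tfrac1{12(\sqrt2\ln(2n)-\ln\delta)}\le\tfrac1{12\ln(1/\delta)}$, whence $(1+4m/n)^{1/2}(1-\delta)^{-1/2}\le A_\delta$. The second step is the reduction: since $e^{\rm ran}(\cdot,d;\Lz^{\rm std})$ is nonincreasing, to prove $n^{\rm ran,\star}(\varepsilon,d;\Lz^{\rm std})\le N$ it suffices to check $e^{\rm ran}(N,d;\Lz^{\rm std})\le\varepsilon\,{\rm CRI}_d$; applying Theorem~2.2 at $n=N$ (which fixes $m=m(N)$) together with Step~1 gives $e^{\rm ran}(N,d;\Lz^{\rm std})\le c\,e^{\rm wor}(m(N),d;\Lz^{\rm all})$, and if $m(N)\ge\bar m$ then monotonicity of $e^{\rm wor}$ bounds this by $c\,e^{\rm wor}(\bar m,d;\Lz^{\rm all})\le c\,(\varepsilon/c)\,{\rm CRI}_d=\varepsilon\,{\rm CRI}_d$. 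Thus for each inequality it remains to choose $\delta$ and verify $m(N)\ge\bar m$ with $N$ equal to the claimed right-hand side.

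For \eqref{2.15} I keep $\delta$ free and take $c=A_\delta$, so that Step~1 delivers exactly the constant $A_\delta$ and $\bar m=n^{\rm wor,\star}(\varepsilon/A_\delta,d;\Lz^{\rm all})$. Setting $N=48M\bigl(4(\ln48+\ln\ln\tfrac1\delta+\ln M)+\ln\tfrac1\delta\bigr)$, the condition $m(N)\ge\bar m$ reads $N\ge48\bar m(\sqrt2\ln(2N)-\ln\delta)$; using $\bar m\le M$ it suffices to check $N\ge48M(\sqrt2\ln(2N)+\ln\tfrac1\delta)$, and after cancelling the common $\ln\tfrac1\delta$ this becomes $\sqrt2\,\ln(2N)\le4(\ln48+\ln\ln\tfrac1\delta+\ln M)$. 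Expanding $\ln(2N)=\ln96+\ln M+\ln\bigl(4(\ln48+\ln\ln\tfrac1\delta+\ln M)+\ln\tfrac1\delta\bigr)$ and comparing term by term, the $\ln M$ and $\ln\ln\tfrac1\delta$ contributions on the right dominate those on the left because $4>\sqrt2$, while the only delicate term, $\ln\bigl(4(\cdots)+\ln\tfrac1\delta\bigr)$, grows only like $\ln\ln\tfrac1\delta$ and is absorbed once $\delta$ is small. This is precisely the force of ``for sufficiently small $\delta$'', and it closes \eqref{2.15}.

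For \eqref{2.14} I instead fix $\delta$ to a harmless constant, say $\delta=\tfrac12$; then by Step~1 the prefactor is at most $A_{1/2}\le4$, so I may take $c=4$ and $\bar m=n^{\rm wor,\star}(\varepsilon/4,d;\Lz^{\rm all})$. Put $N=96\sqrt2\,M(\ln M+\ln(192\sqrt2))$ and $P=\ln M+\ln(192\sqrt2)$, so that $2N=192\sqrt2\,M\,P$ and hence $\ln(2N)=P+\ln P$. The condition $m(N)\ge\bar m=M-1$, namely $N\ge48(M-1)(\sqrt2\ln(2N)-\ln\delta)$, then reduces after dividing by $48\sqrt2$ to an inequality of the shape $(M+1)P\ge(M-1)(\ln P+{\rm const})$, which holds for every $M\ge1$ because $P\ge\ln P+{\rm const}$ already at $P=\ln(192\sqrt2)$. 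Here the extra factor $2$ hidden in $96\sqrt2=2\cdot48\sqrt2$ is exactly the slack that swallows the doubly-logarithmic term $\ln P$ and the constant $-\ln\delta$, which is why no trace of $\delta$ survives in \eqref{2.14}.

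The substantive point in both parts is the same, and is where I expect the genuine work: the defining relation $m=\lfloor n/(48(\sqrt2\ln(2n)-\ln\delta))\rfloor$ turns $m(N)\ge\bar m$ into an implicit, transcendental inequality in $N$, essentially $N\ge48\sqrt2\,\bar m\,\ln(2N)$, so one must exhibit an explicit self-consistent solution and, crucially, control the doubly-logarithmic correction $\ln(\ln M+\cdots)$ that this self-reference generates. In \eqref{2.14} that correction is absorbed by the slack factor $2$, and in \eqref{2.15} by sending $\delta\to0$. The remaining items—rounding $N$ up to an integer, checking $m(N)\ge1$ so that Theorem~2.2 applies, and the trivial case $\bar m=\infty$—are routine bookkeeping.
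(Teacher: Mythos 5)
Your proposal is correct and takes essentially the same approach as the paper: both arguments rest on Theorem 2.2, bound the prefactor by $A_\delta$ (respectively by a constant at most $4$ once $\delta$ is fixed), and then resolve the same implicit relation $N\ge 48\,\bar m\,\bigl(\sqrt2\ln(2N)+\ln\tfrac1\delta\bigr)$ with the same logarithmic estimates. The only differences are cosmetic: the paper fixes $\delta=2^{-\sqrt2}$ rather than $\tfrac12$ and inverts the floor relation once and for all, obtaining $n\le 96\sqrt2(m+1)(\ln(m+1)+\ln(192\sqrt2))$ and its $\delta$-dependent analogue via $\ln x\le \tfrac x2$ and $\ln x\le\tfrac x4$, whereas you substitute the claimed bound $N$ and verify the self-consistency condition $m(N)\ge\bar m$ directly, which is the same computation read in the opposite direction.
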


It is easy to see that  for any $\oz,\dz>0$,
\begin{equation}\label{2.16}\sup_{x\ge1}\frac{96\sqrt2(\ln x +\ln(192\sqrt{2}))}{x^{\oz}}= C_\oz <+\infty. \end{equation}
and
\begin{equation}\label{2.16-0}\sup_{x\ge1}\frac{48(4(\ln48+\ln\ln{\frac{1}{\delta}}+ \ln
x)+\ln{\frac{1}{\delta}})}{x^{\oz}}= C_{\oz,\dz} <+\infty.
\end{equation}

According to \eqref{2.14}-\eqref{2.16-0}, we have the following
corollary which gives two  useful inequalities between the
information complexities $n^{\rm
ran,\star}(\varepsilon,d;\Lambda^{\rm std})$ and $n^{\rm
wor,\star}(\varepsilon,d;\Lambda^{\rm all})$ for $\star\in\{{\rm
ABS,\,NOR}\}$.

\begin{cor} For  $\star\in\{{\rm ABS,\,NOR}\}$ and $\oz>0$, we
have
\begin{equation}\label{2.17}n^{\rm
ran,\star}(\varepsilon,d;\Lambda^{\rm std})\le C_\oz \Big(n^{\rm
wor,\star}(\frac\varepsilon4,d;\Lambda^{\rm all})+1\Big)^{1+\oz}.
\end{equation}
Similarly,  for sufficiently small $\oz,\dz>0$ and $\star\in\{{\rm
ABS,\,NOR}\}$, we have
\begin{equation}\label{2.18}
n^{\rm ran,\star}(\varepsilon,d;\Lambda^{\rm std}) \le C_{\oz,\dz}
 \big(n^{\rm
wor,\star}(\frac{\varepsilon}{A_\delta},d;\Lambda^{\rm
all})+1\big)^{1+\oz},
\end{equation}where $A_{\delta}:=\Big(1+\frac{1}{12\ln{\frac{1}{\delta}}}\Big)^{\frac12}\frac{1}{\sqrt{1-\delta}}$.

\end{cor}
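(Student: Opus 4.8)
The plan is to obtain both inequalities as immediate consequences of Theorem 2.3, absorbing the logarithmic factors in \eqref{2.14} and \eqref{2.15} into an arbitrarily small power of the worst-case complexity by means of the elementary uniform bounds \eqref{2.16} and \eqref{2.16-0}.

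First I would prove \eqref{2.17}. Write $x:=n^{\rm wor,\star}(\frac\varepsilon4,d;\Lambda^{\rm all})+1$ and note $x\ge1$. Then \eqref{2.14} reads
\[
n^{\rm ran,\star}(\varepsilon,d;\Lambda^{\rm std})\le 96\sqrt2\,x\,\bigl(\ln x+\ln(192\sqrt2)\bigr).
\]
The defining property of $C_\oz$ in \eqref{2.16} gives, for every $x\ge1$,
\[
96\sqrt2\,\bigl(\ln x+\ln(192\sqrt2)\bigr)\le C_\oz\,x^{\oz};
\]
multiplying by $x$ and combining with the previous display yields
\[
n^{\rm ran,\star}(\varepsilon,d;\Lambda^{\rm std})\le C_\oz\,x^{1+\oz}=C_\oz\Bigl(n^{\rm wor,\star}(\tfrac\varepsilon4,d;\Lambda^{\rm all})+1\Bigr)^{1+\oz},
\]
which is exactly \eqref{2.17}.

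The proof of \eqref{2.18} would run along identical lines, now with the second inequality of Theorem 2.3. Putting $y:=n^{\rm wor,\star}(\frac{\varepsilon}{A_\delta},d;\Lambda^{\rm all})+1\ge1$, inequality \eqref{2.15} becomes
\[
n^{\rm ran,\star}(\varepsilon,d;\Lambda^{\rm std})\le 48\Bigl(4\bigl(\ln48+\ln\ln\tfrac1\dz+\ln y\bigr)+\ln\tfrac1\dz\Bigr)\,y,
\]
and \eqref{2.16-0} bounds the bracketed factor by $C_{\oz,\dz}\,y^{\oz}$ uniformly in $y\ge1$; multiplying by $y$ produces \eqref{2.18}.

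There is no genuine obstacle here: the argument is a substitution followed by the two supremum estimates. The only point deserving a comment is the finiteness of the suprema in \eqref{2.16} and \eqref{2.16-0}, which I would treat as given (they are asserted in the excerpt) but which rests on the standard fact that $\ln x$ is dominated by $x^{\oz}$ for every fixed $\oz>0$; for fixed $\dz$ the relevant ratios are continuous on $[1,\infty)$ and tend to $0$ as $x\to\infty$, hence are bounded, and the resulting bound is precisely the constant $C_\oz$ (respectively $C_{\oz,\dz}$).
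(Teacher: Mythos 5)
Your proposal is correct and is exactly the paper's own argument: the paper derives \eqref{2.17} and \eqref{2.18} directly from \eqref{2.14}--\eqref{2.15} by absorbing the logarithmic factors via the supremum bounds \eqref{2.16} and \eqref{2.16-0}, which is precisely your substitution-and-multiply step. No gap; the only implicit requirement (that $\dz$ be small enough, e.g.\ $\dz<1/e$, so that $\ln\ln\frac{1}{\dz}$ makes sense) is covered by the hypothesis ``sufficiently small $\dz$.''
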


In the randomized setting,  for the normalized error criterion,
\cite[Theorems 22.19, 22.21, and 22.5]{NW3} gives the equivalences
of  ALG-PT (ALG-SPT), ALG-QPT, ALG-WT for $\Lz^{\rm all}$ and
$\Lz^{\rm std}$, and  shows that the exponents of ALG-SPT and
ALG-QPT for $\Lz^{\rm all}$ and $\Lz^{\rm std}$  are same. For the
absolute error criterion, \cite[Theorems 22.20, 22.22, and
22.6]{NW3} gives the equivalences of  ALG-PT (ALG-SPT), ALG-QPT,
ALG-WT for $\Lz^{\rm all}$ and $\Lz^{\rm std}$ under some
conditions on the initial error $\sqrt{\lz_{1,d}}$. Novak and
Wo\'zniakowski posed Open problems 98, 101, 102 in \cite{NW3}
which ask whether  the above conditions are necessary.

In this paper  we obtain the equivalences of ALG-SPT, ALG-PT,
ALG-QPT, ALG-WT  for $\Lz^{\rm all}$ and $\Lz^{\rm std}$ for the
absolute error criterion without any condition, which means the
above conditions  are unnecessary. This solves Open problems 98,
101, 102 in \cite{NW3}. See the following theorem.

\begin{thm} Consider the problem ${\rm APP}=\{{\rm APP}_d\}_{d\in \Bbb N}$
in the randomized  setting for the absolute error criterion.
Then\vskip 2mm

$\bullet$ $\rm ALG$-$\rm SPT$, $\rm ALG$-$\rm PT$, $\rm ALG$-$\rm
QPT$, $\rm ALG$-$\rm WT$ for $\Lz^{\rm all}$ is equivalent to $\rm
ALG$-$\rm SPT$, $\rm ALG$-$\rm PT$, $\rm ALG$-$\rm QPT$,  $\rm
ALG$-$\rm WT$ for $\Lz^{\rm std}$;\vskip 2mm

$\bullet$ The exponents $\rm ALG$-$p^{\rm ran, ABS}(\Lz)$ of $\rm
ALG$-$\rm SPT$ for $\Lz^{\rm all}$ and $\Lz^{\rm std}$ are same,
and the exponents $\rm ALG$-$t^{\rm ran, ABS}(\Lz)$ of $\rm
ALG$-$\rm QPT$ for $\Lz^{\rm all}$ and $\Lz^{\rm std}$ are also
same.
\end{thm}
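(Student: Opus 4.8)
The plan is to prove both bullet points by establishing, for each of the four notions, the two opposite implications between $\Lz^{\rm std}$ and $\Lz^{\rm all}$ in the randomized setting for ABS. One direction is free: since $\Lz^{\rm std}\subset\Lz^{\rm all}$, inequality \eqref{2.4-1} gives $n^{\rm ran,ABS}(\varepsilon,d;\Lz^{\rm all})\le n^{\rm ran,ABS}(\varepsilon,d;\Lz^{\rm std})$, so every tractability property of $\Lz^{\rm std}$ transfers to $\Lz^{\rm all}$, and every admissible SPT (resp.\ QPT) exponent for $\Lz^{\rm std}$ is admissible for $\Lz^{\rm all}$; in particular ${\rm ALG}$-$p^{\rm ran,ABS}(\Lz^{\rm all})\le{\rm ALG}$-$p^{\rm ran,ABS}(\Lz^{\rm std})$ and likewise for the QPT exponent. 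Hence the whole content lies in the reverse direction, from $\Lz^{\rm all}$ to $\Lz^{\rm std}$.

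For that direction I would first invoke Corollary 2.1 to replace randomized $\Lz^{\rm all}$ tractability by the corresponding worst-case $\Lz^{\rm all}$ tractability (with the same SPT and QPT exponents), and then feed the worst-case bound into \eqref{2.17},
$$n^{\rm ran,ABS}(\varepsilon,d;\Lz^{\rm std})\le C_\oz\bigl(n^{\rm wor,ABS}(\tfrac\varepsilon4,d;\Lz^{\rm all})+1\bigr)^{1+\oz}.$$
If $\Lz^{\rm all}$ is worst-case ALG-SPT, so $n^{\rm wor,ABS}(\varepsilon,d;\Lz^{\rm all})\le C\varepsilon^{-p}$, then this yields $n^{\rm ran,ABS}(\varepsilon,d;\Lz^{\rm std})\le C'\varepsilon^{-p(1+\oz)}$, i.e.\ ALG-SPT for $\Lz^{\rm std}$; the factor $4$ only contributes the constant $4^p$. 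The same substitution with $n^{\rm wor,ABS}(\varepsilon,d;\Lz^{\rm all})\le Cd^q\varepsilon^{-p}$ gives ALG-PT (with bound $d^{q(1+\oz)}\varepsilon^{-p(1+\oz)}$). For ALG-WT I would take logarithms, obtaining $\ln n^{\rm ran,ABS}(\varepsilon,d;\Lz^{\rm std})\le\ln C_\oz+(1+\oz)\ln(n^{\rm wor,ABS}(\tfrac\varepsilon4,d;\Lz^{\rm all})+1)$, and control the argument shift by $(\tfrac\varepsilon4)^{-1}+d=4\varepsilon^{-1}+d\le 4(\varepsilon^{-1}+d)$, which bounds the relevant quotient by $4$ times the worst-case WT quotient and lets me pass to the limit. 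ALG-QPT for $\Lz^{\rm std}$ follows similarly from the worst-case QPT bound once the shift inside the exponential is absorbed via $1+\ln(4\varepsilon^{-1})\le(1+\ln4)(1+\ln\varepsilon^{-1})$ for $\varepsilon\in(0,1)$.

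For the exponent equalities I would run the argument quantitatively. For ALG-SPT the computation above shows ${\rm ALG}$-$p^{\rm ran,ABS}(\Lz^{\rm std})\le p(1+\oz)$ for every $p>{\rm ALG}$-$p^{\rm wor,ABS}(\Lz^{\rm all})$ and every $\oz>0$; letting $\oz\to0$ and $p\downarrow{\rm ALG}$-$p^{\rm wor,ABS}(\Lz^{\rm all})={\rm ALG}$-$p^{\rm ran,ABS}(\Lz^{\rm all})$ (Corollary 2.1) gives the matching upper bound, hence equality. The hard part is the QPT exponent: with \eqref{2.17} the shift $\varepsilon/4$ forces a multiplicative loss $(1+\ln4)$ in the QPT exponent $t$ that does not vanish as $\oz\to0$, so \eqref{2.17} alone cannot give equality. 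I resolve this by using instead the sharper inequality \eqref{2.18},
$$n^{\rm ran,ABS}(\varepsilon,d;\Lz^{\rm std})\le C_{\oz,\delta}\bigl(n^{\rm wor,ABS}(\tfrac{\varepsilon}{A_\delta},d;\Lz^{\rm all})+1\bigr)^{1+\oz},$$
where $A_\delta=(1+\frac1{12\ln(1/\delta)})^{1/2}(1-\delta)^{-1/2}>1$ tends to $1$ as $\delta\to0$. Inserting the worst-case QPT bound and using $1+\ln(A_\delta\varepsilon^{-1})\le(1+\ln A_\delta)(1+\ln\varepsilon^{-1})$, the randomized $\Lz^{\rm std}$ bound acquires QPT exponent $t(1+\ln A_\delta)(1+\oz)$; letting $\oz\to0$ and then $\delta\to0$ drives this to $t$, so ${\rm ALG}$-$t^{\rm ran,ABS}(\Lz^{\rm std})\le{\rm ALG}$-$t^{\rm ran,ABS}(\Lz^{\rm all})$, and equality follows from the free direction.
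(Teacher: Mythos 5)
Your proposal is correct and takes essentially the same route as the paper: the paper deduces this theorem from its Theorems 4.1--4.3, whose proofs consist precisely of your steps --- the free direction via \eqref{2.4-1}, reduction of randomized $\Lz^{\rm all}$ tractability to the worst case via Corollary 2.1, and substitution of the worst-case bounds into \eqref{2.17} for SPT, PT, QPT and WT. In particular, your key observation that \eqref{2.17} loses a factor $(1+\ln 4)$ in the QPT exponent and must be replaced by \eqref{2.18} with $A_\delta\to 1$ (so that $(1+\oz)(1+\ln A_\delta)\to 1$) is exactly the argument in the paper's proof of Theorem 4.2.
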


In the randomized setting for the normalized or absolute error
criterion, the equivalences of ALG-UWT and ALG-$(s,t)$-WT, and the
various notions of  EXP-tractability for $\Lz^{\rm all}$ and
$\Lz^{\rm std}$,  as far as we know,  have not been studied. In
this paper, we investigate the problem and obtain the following
theorem which gives the above equivalences without any condition.

\begin{thm}
Consider the problem ${\rm APP}=\{{\rm APP}_d\}_{d\in \Bbb N}$ in
the randomized  setting  for the absolute  or  normalized error
criterion. Then for $\star\in\{{\rm ABS,NOR}\}$,\vskip 2mm

$\bullet$ $\rm EXP$-$\rm SPT$, $\rm EXP$-$\rm PT$, $\rm EXP$-$\rm
QPT$, $\rm EXP$-$\rm UWT$, $\rm EXP$-$\rm WT$, $\rm
EXP$-$(s,t)$-$\rm WT$, $\rm ALG$-$\rm UWT$,  $\rm
ALG$-$(s,t)$-$\rm WT$ for $\Lz^{\rm all}$ is equivalent to $\rm
EXP$-$\rm SPT$, $\rm EXP$-$\rm PT$, $\rm EXP$-$\rm QPT$, $\rm
EXP$-$\rm UWT$, $\rm EXP$-$\rm WT$, $\rm EXP$-$(s,t)$-$\rm WT$,
$\rm ALG$-$\rm UWT$,  $\rm ALG$-$(s,t)$-$\rm WT$ for $\Lz^{\rm
std}$;\vskip 2mm

$\bullet$ The exponents $\rm EXP$-$p^{\rm ran, \star}(\Lz)$ of
$\rm EXP$-$\rm SPT$ for $\Lz^{\rm all}$ and $\Lz^{\rm std}$ are
same, and the exponents $\rm EXP$-$t^{\rm ran, \star}(\Lz)$ of
$\rm EXP$-$\rm QPT$ for $\Lz^{\rm all}$ and $\Lz^{\rm std}$ are
also same.

\end{thm}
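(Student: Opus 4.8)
The plan is to prove both directions by reducing everything to three facts already established: the monotonicity \eqref{2.4-1} coming from $\Lz^{\rm std}\subset\Lz^{\rm all}$, the worst case / randomized equivalence for $\Lz^{\rm all}$ (Corollary 2.1, including equality of the SPT and QPT exponents), and the transfer inequalities \eqref{2.17} and \eqref{2.18} of Corollary 2.4. Every step is uniform in $\star\in\{{\rm ABS},{\rm NOR}\}$, so I treat the two error criteria at once. The easy direction is immediate: \eqref{2.4-1} gives $n^{\rm ran,\star}(\vz,d;\Lz^{\rm all})\le n^{\rm ran,\star}(\vz,d;\Lz^{\rm std})$, so any bound realizing one of the eight notions for $\Lz^{\rm std}$ is inherited verbatim by $\Lz^{\rm all}$; hence each notion for $\Lz^{\rm std}$ implies the same notion for $\Lz^{\rm all}$, and the SPT and QPT exponents for $\Lz^{\rm all}$ are no larger than those for $\Lz^{\rm std}$.

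For the converse I would chain the two remaining tools: if a notion holds for $\Lz^{\rm all}$ in the randomized setting, Corollary 2.1 transports it (with the same SPT and QPT exponents) to $\Lz^{\rm all}$ in the worst case setting, and Corollary 2.4 then carries it onto $\Lz^{\rm std}$ in the randomized setting. The whole matter thus reduces to checking that each notion survives the passage $n^{\rm ran,\star}(\vz,d;\Lz^{\rm std})\le C\bigl(n^{\rm wor,\star}(\vz/a,d;\Lz^{\rm all})+1\bigr)^{1+\oz}$, with $a\in\{4,A_\dz\}$ and $\oz>0$ at our disposal. For EXP-SPT this is routine: a bound $C(\ln\vz^{-1}+1)^{\pi}$ becomes, after raising to the power $1+\oz$ and absorbing the harmless scaling $\vz\mapsto\vz/a$ into the constant, a bound $C'(\ln\vz^{-1}+1)^{\pi(1+\oz)}$; letting $\oz\to0$ and $\pi\downarrow{\rm EXP}\!-\!p^{\rm wor,\star}(\Lz^{\rm all})$ recovers the exponent. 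EXP-PT is identical with an extra factor $d^{q}$. For the five weak notions EXP-UWT, EXP-WT, EXP-$(s,t)$-WT, ALG-UWT and ALG-$(s,t)$-WT I would take logarithms in \eqref{2.17}, divide by the defining denominator, note that $\ln C_\oz$ yields a vanishing term, and observe that replacing $\vz$ by $\vz/a$ alters the denominator (be it $(1+\ln\vz^{-1})^{s}+d^{t}$ or $\vz^{-s}+d^{t}$) only by a bounded factor; the limit defining the notion for $\Lz^{\rm all}$ in the worst case setting then forces the same limit for $\Lz^{\rm std}$.

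The delicate point, which I flag as the main obstacle, is the equality of the QPT exponent, because there the scaling $\vz\mapsto\vz/a$ enters inside the double logarithm $1+\ln(\ln\vz^{-1}+1)$ and multiplies the exponent $t$ by a factor depending on $\ln a$ that does not vanish as $\oz\to0$. Indeed, with \eqref{2.17} (so $a=4$) one only gets $1+\ln(\ln(4\vz^{-1})+1)\le(1+\ln4)\bigl(1+\ln(\ln\vz^{-1}+1)\bigr)$, hence the non-sharp bound ${\rm EXP}\!-\!t^{\rm ran,\star}(\Lz^{\rm std})\le(1+\ln4)\,{\rm EXP}\!-\!t^{\rm wor,\star}(\Lz^{\rm all})$. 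The remedy is to use instead the refined inequality \eqref{2.18}, whose scaling constant $A_\dz=\bigl(1+\tfrac{1}{12\ln(1/\dz)}\bigr)^{1/2}(1-\dz)^{-1/2}$ satisfies $A_\dz\to1$, hence $\ln A_\dz\to0$, as $\dz\to0$; the corresponding factor $1+\ln(1+\ln A_\dz)$ then tends to $1$. Letting successively $\oz\to0$, $\dz\to0$, and $\tau\downarrow{\rm EXP}\!-\!t^{\rm wor,\star}(\Lz^{\rm all})$ yields ${\rm EXP}\!-\!t^{\rm ran,\star}(\Lz^{\rm std})\le{\rm EXP}\!-\!t^{\rm wor,\star}(\Lz^{\rm all})={\rm EXP}\!-\!t^{\rm ran,\star}(\Lz^{\rm all})$, and combining with the easy direction establishes equality of both the SPT and QPT exponents, completing the proof.
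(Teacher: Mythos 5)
Your proposal is correct and follows essentially the same route as the paper: the paper proves Theorem 2.6 by combining its Theorems 4.4, 4.5, 5.2, 5.4, 5.5, and 5.6, each of which uses exactly your reduction chain (monotonicity \eqref{2.4-1} for the easy direction, then Corollary 2.1 plus \eqref{2.17} for the converse), including your key observation that the sharp EXP-QPT exponent equality requires switching from \eqref{2.17} to \eqref{2.18} so that the scaling constant $A_\dz\to1$ makes the factor $(1+\oz)\bigl(1+\ln(1+\ln A_\dz)\bigr)$ tend to $1$.
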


Combining Corollary 2.1 with Theorems 2.5 and 2.6 we obtain the
following corollary.

\begin{cor}Consider the approximation problem ${\rm APP}= \{{\rm APP}_d\}_{d\in\Bbb N}$ for the absolute
or normalized error criterion in the randomized and worst case
settings. Then \vskip 2mm

$\bullet$ $\rm ALG$-$\rm SPT$, $\rm ALG$-$\rm PT$, $\rm ALG$-$\rm
QPT$, $\rm ALG$-$\rm UWT$,   $\rm ALG$-$\rm WT$,  $\rm
ALG$-$(s,t)$-$\rm WT$ in the worst case setting for   $\Lz^{\rm
all}$ is equivalent to $\rm ALG$-$\rm SPT$, $\rm ALG$-$\rm PT$,
$\rm ALG$-$\rm QPT$, $\rm ALG$-$\rm UWT$, $\rm ALG$-$\rm WT$, $\rm
ALG$-$(s,t)$-$\rm WT$ in the randomized setting for $\Lz^{\rm
all}$  or for $\Lz^{\rm std}$; \vskip 2mm

$\bullet$ $\rm EXP$-$\rm SPT$, $\rm EXP$-$\rm PT$, $\rm EXP$-$\rm
QPT$, $\rm EXP$-$\rm UWT$, $\rm EXP$-$\rm WT$, $\rm
EXP$-$(s,t)$-$\rm WT$ in the worst case setting for   $\Lz^{\rm
all}$ is equivalent to $\rm EXP$-$\rm SPT$, $\rm EXP$-$\rm PT$,
$\rm EXP$-$\rm QPT$, $\rm EXP$-$\rm UWT$, $\rm EXP$-$\rm WT$, $\rm
EXP$-$(s,t)$-$\rm WT$ in the randomized setting for  $\Lz^{\rm
all}$  or for $\Lz^{\rm std}$;  \vskip 2mm

$\bullet$  the exponents of ${\rm SPT}$ and ${\rm QPT}$ are the
same in the worst case setting for   $\Lz^{\rm all}$ and in the
randomized setting for  $\Lz^{\rm all}$  and  $\Lz^{\rm std}$,
i.e., for $\star \in \{{\rm ABS, NOR}\}$,
\begin{align*}{\rm ALG}\!-\!p^{\rm wor,\star}(\Lz^{\rm all}) &= {\rm ALG}\!-\!p^{\rm ran,\star}(\Lz^{\rm all})= {\rm ALG}\!-\!p^{\rm ran,\star}(\Lz^{\rm std}),\\
{\rm ALG}\!-\!t^{\rm wor,\star}(\Lz^{\rm all}) &= {\rm
ALG}\!-\!t^{\rm ran,\star}(\Lz^{\rm all})= {\rm
ALG}\!-\!t^{\rm ran,\star}(\Lz^{\rm std}), \\
{\rm EXP}\!-\!p^{\rm wor,\star}(\Lz^{\rm all}) &= {\rm EXP}\!-\!p^{\rm ran,\star}(\Lz^{\rm all})= {\rm EXP}\!-\!p^{\rm ran,\star}(\Lz^{\rm std}),\\
{\rm EXP}\!-\!t^{\rm wor,\star}(\Lz^{\rm all}) &= {\rm
EXP}\!-\!t^{\rm ran,\star}(\Lz^{\rm all})= {\rm EXP}\!-\!t^{\rm
ran,\star}(\Lz^{\rm std}). \end{align*}

\end{cor}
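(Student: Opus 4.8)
The plan is to prove the corollary entirely by transitivity, taking the randomized setting for $\Lz^{\rm all}$ as a common pivot. Every clause compares a worst case statement for $\Lz^{\rm all}$ with a randomized statement for $\Lz^{\rm all}$ or for $\Lz^{\rm std}$, and both of these settings have already been linked to the randomized setting for $\Lz^{\rm all}$ in the earlier results. So the first step is simply to record the two families of links. On one side, Corollary 2.1 already equates, for each $\star\in\{{\rm ABS,NOR}\}$, every ALG- and EXP-notion in the worst case setting for $\Lz^{\rm all}$ with the corresponding notion in the randomized setting for $\Lz^{\rm all}$, and gives ${\rm ALG}\!-\!p^{\rm wor,\star}(\Lz^{\rm all})={\rm ALG}\!-\!p^{\rm ran,\star}(\Lz^{\rm all})$ together with the analogous identities for ALG-$t$, EXP-$p$, and EXP-$t$. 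On the other side, Theorems 2.5 and 2.6 provide the complementary links between the randomized settings for $\Lz^{\rm all}$ and for $\Lz^{\rm std}$, together with the matching equalities of the randomized SPT- and QPT-exponents for the two classes.

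The second step is to assemble these links notion by notion across the pivot. For ALG-SPT, ALG-PT, ALG-QPT, and ALG-WT, the equivalence between the randomized settings for $\Lz^{\rm all}$ and for $\Lz^{\rm std}$ is furnished by \cite{NW3} for the normalized criterion and by Theorem 2.5 for the absolute criterion; for ALG-UWT and ALG-$(s,t)$-WT, as well as for all six EXP-notions, it is furnished by Theorem 2.6 for both criteria. Chaining each such equivalence with the corresponding worst case $\Lz^{\rm all}$ versus randomized $\Lz^{\rm all}$ equivalence of Corollary 2.1 yields the three-way equivalences asserted in the first two bullets. The exponent identities of the third bullet are obtained the same way: one concatenates the worst case equals randomized equality for $\Lz^{\rm all}$ coming from Corollary 2.1 with the randomized $\Lz^{\rm all}$ equals randomized $\Lz^{\rm std}$ equality coming from Theorems 2.5 and 2.6, giving the displayed chains for ALG-$p$, ALG-$t$, EXP-$p$, and EXP-$t$.

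The one point that needs care is bookkeeping: the lists of notions on the two sides of each bullet must match exactly through the pivot. This is immediate for the algebraic bullet. For the exponential bullet one must recall that, by Corollary 2.1, the worst case EXP-notions for $\Lz^{\rm all}$ are intertwined with ALG-UWT and ALG-$(s,t)$-WT in the worst case; transporting them through the randomized $\Lz^{\rm all}$ pivot to $\Lz^{\rm std}$ therefore uses precisely the ALG-UWT and ALG-$(s,t)$-WT equivalences of Theorem 2.6 alongside the EXP-equivalences, and the two families fit together consistently. I do not expect any genuine obstacle here: the whole argument is a transitivity bookkeeping, and all substantive inputs---the relation \eqref{2.014} underlying Corollary 2.1 and the inequalities of Theorems 2.5 and 2.6---are already in hand.
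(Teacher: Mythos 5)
Your proposal is correct and matches the paper's own proof, which is exactly the same transitivity argument: Corollary 2.2 is obtained by combining Corollary 2.1 (worst case $\Lz^{\rm all}$ $\leftrightarrow$ randomized $\Lz^{\rm all}$, with equal exponents) with Theorems 2.5 and 2.6 (randomized $\Lz^{\rm all}$ $\leftrightarrow$ randomized $\Lz^{\rm std}$, with equal exponents), using the randomized setting for $\Lz^{\rm all}$ as the pivot. Your bookkeeping remark that the normalized-criterion ALG-SPT/PT/QPT/WT links come from \cite{NW3} while Theorem 2.5 supplies only the absolute-criterion case is accurate and, if anything, slightly more careful than the paper's one-line proof.
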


\section{Proofs of Theorems 2.2 and 2.3}

Let us keep the notation of  Subsection 2.1. For any
$m\in\mathbb{N}$, we define the  functions $ h_{m,d}(x)$ and
$\oz_{m,d}$ on $D_d$ by
$$h_{m,d}(x):=\frac{1}{m}\sum_{j=1}^{m}|\eta_{j,d}(x)|^{2},\ \ \
\oz_{m,d}(x):=h_{m,d}(x)\,\rho_d(x),$$where
$\{\eta_{j,d}\}_{j=1}^\infty$ is an orthonormal system in
$G_d=L_2(D_d,\rho_d(x)dx)$.  Then $\oz_{m,d}$ is a probability
density function on $D_d$, i.e., $\int_{D_d}\oz_{m,d}(x)\,dx=1$.
We define the corresponding probability measure $\mu_{m,d}$ by
$$\mu_{m,d}(A)=\int_{A}\oz_{m,d}(x)\,dx,$$where $A$ is a Borel subset of
$D_d$. We use the convention that $\frac00:=0$. Then $\{\tilde
\eta_{j,d}\}_{j=1}^\infty$ is an orthonormal system in
$L_2(D_d,\mu_{m,d})$, where $$ \tilde \eta_{j,d}:=\frac
{\eta_{j,d}} {\sqrt{h_{m,d}}} .$$

For ${\rm X}=(x^1, \dots, x^n)\in D_d^n$, we use the following
matrices
\begin{equation}
\widetilde{L}_m=\widetilde{L}_m({\rm X})=\left(
\begin{array}{cccc}
\widetilde{\eta}_{1,d}(x^{1})&\widetilde{\eta}_{2,d}(x^{1})&\cdots&\widetilde{\eta}_{m,d}(x^{1})\\
\widetilde{\eta}_{1,d}(x^{2})&\widetilde{\eta}_{2,d}(x^{2})&\cdots&\widetilde{\eta}_{m,d}(x^{2})\\
\vdots&\vdots&\ &\vdots\\
\widetilde{\eta}_{1,d}(x^{n})&\widetilde{\eta}_{2,d}(x^{n})&\cdots&\widetilde{\eta}_{m,d}(x^{n})\\
\end{array}
\right)\ \ \ \ \ {\rm and} \ \ \ \ \ \widetilde H_m=\frac
1n\widetilde{L}_m^*\widetilde{L}_m,\label{3.1}
\end{equation}where $A^*$ is the conjugate transpose of a matrix $A$.
Note that $$\widetilde{N}(m):=\sup\limits_{x\in
D_d}\sum\limits_{k=1}^{m}|\widetilde{\eta}_{k,d}(x)|^{2}=m.$$

According to  \cite[Propositions 5.1 and 3.1]{KUV} we have the
following results.

\begin{lem}
Let $n,m\in\mathbb{N}$. Let $x^{1},\ldots,x^{n}\in D_d$ be drawn
independently and identically distributed at random with respect
to the probability measure $\mu_{m,d}$. Then it holds for $0<t<1$
that
$$\mathbb{P}(\|\widetilde{H}_{m}-I_{m}\|>t)\leq(2n)^{\sqrt{2}}\exp\left(-\frac{nt^{2}}{12m}\right),
$$where $\widetilde{L}_m, \ \widetilde H_m$ are given by \eqref{3.1},  $I_m$ is the identity matrix of order
$m$, and $\|L\|$ denotes the spectral norm (i.e. the largest
singular value) of a matrix $L$.
\end{lem}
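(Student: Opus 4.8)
The plan is to recognize $\widetilde H_m$ as a normalized sum of independent, identically distributed, Hermitian positive semidefinite rank-one matrices and to run a matrix Chernoff (matrix Laplace transform) argument, exactly as in the cited \cite[Propositions 5.1 and 3.1]{KUV}. Writing $\vb(x)=(\widetilde\eta_{1,d}(x),\dots,\widetilde\eta_{m,d}(x))^\top\in\CC^m$, the $i$-th row of $\widetilde L_m$ is $\vb(x^i)^\top$, so $\widetilde H_m=\frac1n\widetilde L_m^*\widetilde L_m=\frac1n\sum_{i=1}^n X_i$, where $X_i$ is the Hermitian positive semidefinite matrix with entries $(X_i)_{j,k}=\overline{\widetilde\eta_{j,d}(x^i)}\,\widetilde\eta_{k,d}(x^i)$. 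Since the $x^i$ are i.i.d.\ with law $\mu_{m,d}$ and $\{\widetilde\eta_{j,d}\}$ is orthonormal in $L_2(D_d,\mu_{m,d})$, the first step is to record the mean identity $\mathbb E[X_i]=I_m$, which follows from $\mathbb E[(X_i)_{j,k}]=\int_{D_d}\overline{\widetilde\eta_{j,d}}\,\widetilde\eta_{k,d}\,d\mu_{m,d}=\delta_{j,k}$. Hence $\mathbb E[\widetilde H_m]=I_m$, which is the reference point of the concentration statement.

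The second ingredient is a deterministic uniform bound on each summand. Each $X_i$ has rank one, so its spectral norm equals its trace, $\|X_i\|=\sum_{k=1}^m|\widetilde\eta_{k,d}(x^i)|^2\le\widetilde N(m)=m$, the last equality being the normalization noted immediately before the lemma. With $\mathbb E\bigl[\sum_i X_i\bigr]=nI_m$ (so both extreme eigenvalues of the mean equal $n$) and $\|X_i\|\le m$ almost surely, I would invoke the matrix Chernoff inequality for sums of independent positive semidefinite matrices. Splitting $\{\|\widetilde H_m-I_m\|>t\}=\{\lambda_{\max}(\widetilde H_m)>1+t\}\cup\{\lambda_{\min}(\widetilde H_m)<1-t\}$ and applying the upper and lower tail bounds gives
\[
\mathbb P\bigl(\lambda_{\max}(\widetilde H_m)>1+t\bigr)\le m\Bigl(\tfrac{e^{t}}{(1+t)^{1+t}}\Bigr)^{n/m},\qquad
\mathbb P\bigl(\lambda_{\min}(\widetilde H_m)<1-t\bigr)\le m\Bigl(\tfrac{e^{-t}}{(1-t)^{1-t}}\Bigr)^{n/m}.
\]
Scalar estimates of the form $\frac{e^{\pm t}}{(1\pm t)^{1\pm t}}\le e^{-ct^2}$ on $(0,1)$ then convert each factor into a Gaussian-type exponential in $t$.

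Combining the two tails by a union bound produces a bound of the shape $(\text{dimension factor})\cdot\exp(-c\,nt^2/m)$, and the remaining work is purely to match the precise constants asserted in the lemma. This is where I expect the only genuine difficulty: replacing the naive dimensional prefactor $2m$ by the $m$-independent quantity $(2n)^{\sqrt2}$, and pinning down the exponent constant $\tfrac1{12}$. Rather than quoting an off-the-shelf matrix Chernoff bound, I would carry out the matrix Laplace transform estimate directly, as in \cite{KUV}: control $\mathbb E\,\mathrm{tr}\,\exp\bigl(\theta\sum_i X_i\bigr)$ via the standard trace-exponential (Golden--Thompson/Lieb) mechanism, bound the per-summand moment generating factor using $\|X_i\|\le m$ and $\mathbb E X_i=I_m$, and optimize the free parameters. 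This refined computation is exactly what trades a slightly larger constant in the exponent for a prefactor independent of $m$. Since the two structural facts the argument needs---the mean identity $\mathbb E\widetilde H_m=I_m$ and the exact normalization $\widetilde N(m)=m$---have been verified above, the rest is the constant bookkeeping recorded in \cite[Propositions 5.1 and 3.1]{KUV}.
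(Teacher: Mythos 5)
Your proposal is correct and follows essentially the same route as the paper, whose entire proof of this lemma is the citation to \cite[Propositions 5.1 and 3.1]{KUV}: you verify precisely the two structural hypotheses (the mean identity $\mathbb{E}[X_i]=I_m$, from orthonormality of $\{\widetilde{\eta}_{j,d}\}$ in $L_2(D_d,\mu_{m,d})$, and the uniform bound $\|X_i\|\le \widetilde{N}(m)=m$) under which the concentration inequality of \cite{KUV} yields the stated estimate. One small inaccuracy, harmless here because you ultimately defer the constant-precise computation to \cite{KUV} exactly as the paper does: the $m$-independent prefactor $(2n)^{\sqrt{2}}$ cannot be produced by the Golden--Thompson/Lieb trace-exponential mechanism alone, which inevitably yields a prefactor proportional to the ambient dimension $m$; it comes from an Oliveira-type symmetrization argument in which the symmetrized random matrix has rank at most $n$, so that the trace bound is effectively taken in an (at most) $n$-dimensional space.
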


\begin{lem}
Let $n,m\in\mathbb{N}$, and let $\widetilde{L}_m, \ \widetilde
H_m$ be given by \eqref{3.1}.  If
$$\|\widetilde{H}_{m}-I_{m}\|\le 1/2,$$then
\begin{equation}\|(\widetilde{L}^{*}_m\widetilde{L}_m)^{-1}\|\le \frac2n.\label{3.3}\end{equation}
\end{lem}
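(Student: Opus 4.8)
The plan is to exploit that $\widetilde H_m = \frac1n \widetilde L_m^*\widetilde L_m$ is Hermitian and positive semidefinite, and to read off a lower bound on its smallest eigenvalue directly from the perturbation hypothesis. First I would observe that $\widetilde H_m - I_m$ is Hermitian, so its spectral norm equals the largest modulus of its eigenvalues; the hypothesis $\|\widetilde H_m - I_m\|\le 1/2$ therefore forces every eigenvalue $\mu$ of $\widetilde H_m$ to satisfy $|\mu-1|\le 1/2$, that is, $\mu\in[1/2,3/2]$. In particular $\lambda_{\min}(\widetilde H_m)\ge 1/2>0$, so $\widetilde H_m$ is invertible and hence so is $\widetilde L_m^*\widetilde L_m = n\,\widetilde H_m$.

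Next I would convert this bound on the smallest eigenvalue into a bound on the norm of the inverse. Since $\widetilde H_m$ is Hermitian positive definite, the spectral theorem gives $\|\widetilde H_m^{-1}\| = 1/\lambda_{\min}(\widetilde H_m)\le 2$. Finally, using $(\widetilde L_m^*\widetilde L_m)^{-1} = \frac1n\,\widetilde H_m^{-1}$ together with the homogeneity of the spectral norm, I obtain
$$\|(\widetilde L_m^*\widetilde L_m)^{-1}\| = \frac1n\,\|\widetilde H_m^{-1}\| \le \frac2n,$$
which is exactly the desired estimate \eqref{3.3}.

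There is no serious obstacle here; the argument is routine linear algebra. The only points requiring a little care are the two applications of the spectral theorem for Hermitian matrices: that the spectral norm of a Hermitian matrix equals the largest modulus of its eigenvalues (used for $\widetilde H_m - I_m$), and that the spectral norm of a Hermitian positive definite matrix's inverse equals the reciprocal of its smallest eigenvalue (used for $\widetilde H_m^{-1}$). Both follow from diagonalizing in an orthonormal eigenbasis. One should also note that the positive semidefiniteness of $\widetilde H_m$, and hence the reality of its eigenvalues, is automatic from its Gram-matrix form $\frac1n\widetilde L_m^*\widetilde L_m$, so the eigenvalue perturbation reasoning (Weyl's inequality applied to the Hermitian pair $\widetilde H_m$ and $I_m$) applies directly.
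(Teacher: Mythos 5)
Your proof is correct, and it is exactly the routine spectral argument one would expect: the paper itself does not prove this lemma but simply cites \cite[Proposition 3.1]{KUV}, and your reasoning (Hermitian $\widetilde H_m$ has spectrum in $[1/2,3/2]$ under the hypothesis, hence $\|\widetilde H_m^{-1}\|\le 2$, and rescaling by $1/n$ gives \eqref{3.3}) is the standard content of that cited result. One tiny simplification: invoking Weyl's inequality is unnecessary, since subtracting $I_m$ shifts the spectrum of $\widetilde H_m$ exactly, so the eigenvalue localization is immediate.
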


\begin{rem}From Lemma 3.1 we immediately obtain that the matrix $\widetilde H_m \in \Bbb C^{m\times m}$ has only eigenvalues larger than $t := 1/2$
and satisfies $$\|\widetilde{H}_{m}-I_{m}\|\le 1/2$$ with
probability at least $1-\delta$ if
\begin{equation*}
\widetilde{N}(m)=m\le \frac{n}{48(\sqrt{2}\ln(2n)-\ln{\delta})}.
\end{equation*}

Specifically, if  \begin{equation} m=
\Big\lfloor\frac{n}{48(\sqrt{2}\ln(2n)-\ln{\delta})}\Big\rfloor\ge
1,\label{3.2}
\end{equation}  then the matrix $\widetilde H_m $ has only eigenvalues larger than $1/2$ and satisfies $$\|\widetilde{H}_{m}-I_{m}\|\le 1/2$$ with probability
at least $1-\delta$, where $\lfloor x\rfloor$ denotes the largest
integer not exceeding $x$. It follows that
\begin{equation}\label{3.4}\mathbb{P}\big(\|\widetilde{H}_{m}-I_{m}\|\le 1/2\big)\ge 1-\delta\end{equation}
holds given the condition \eqref{3.2}.
\end{rem}

Now let $m, n\in\Bbb N$ satisfying \eqref{3.2}, $x^1,\dots,x^n$ be
independent and identically distributed sample points  from $D_d$
that are distributed according to the probability measure
$\mu_{m,d}$, and  $\widetilde L_m, \widetilde H_m$ be given by
\eqref{3.1}. If the sample points ${\rm X}=(x^1,\dots,x^n)$
satisfy $\|\widetilde{H}_{m}-I_{m}\|> 1/2$, then we discard these
points and re-sample until the the re-sample points satisfy
$\|\widetilde{H}_{m}-I_{m}\|\le 1/2$. That is, we consider the
conditional distribution given the event
$\|\widetilde{H}_{m}-I_{m}\|\le 1/2$ and the conditional
expectation
$$\Bbb E(X\,\big|\ \|\widetilde{H}_{m}-I_{m}\|\le 1/2)=\frac{\int_{\|\widetilde{H}_{m}-I_{m}\|\le
1/2} X(x^1,\dots,x^n)\,d\mu_{m,d}(x^1)\dots d\mu_{m,d}(x^n)}
{\mathbb{P}\big(\|\widetilde{H}_{m}-I_{m}\|\le 1/2\big)}$$ of a
random variable $X$.

If $\|\widetilde{H}_{m}-I_{m}\|\le 1/2$ for some ${\rm
X}=(x^1,\dots,x^n)\in D_d^n$, then
$\widetilde{L}_{m}=\widetilde{L}_{m}({\rm X}) $ has the full rank.
The algorithm is a weighted least squares estimator
\begin{equation}{S}^{m}_{\rm X}\,f=\mathop{\arg\min}_{g\in
V_m}
\frac{|f(x^i)-g(x^i)|^{2}}{h_{m,d}(x^i)},\label{3.5}\end{equation}
where $V_m:={\rm span}\{\eta_{1,d},\dots,\eta_{m,d}\}$. It follows
that ${S}^{m}_{\rm X}\,f=f$ whenever $f\in V_m$.
\\

\begin{table}[!ht]
\begin{tabular}{clllrrrr}
\toprule
\multicolumn{6}{l}{\textbf{Algorithm} \quad Weighted least squares regression.}
\\
\midrule
Input:&${\rm X}=(x^1,\dots,x^n)\in D_d^n$  &    &set of distinct sampling nodes,\\
&$\tilde {\rm
f}=\Big(\frac{f(x^1)}{\sqrt{h_{m,d}(x^1)}},\dots,\frac{f(x^n)}{\sqrt{h_{m,d}(x^n)}}\Big)^T$
&
&weighted samples of $f$ evaluted\\
& & &at the nodes from ${\rm X}$,\\
&$m\in \Bbb N$  &    &$m<n$ such that the matrix \\
& & & $\widetilde{L}_{m}:=\widetilde{L}_{m}({\rm X})$ from \eqref{3.1} has\\
& & &full (column) rank.\\
\multicolumn{6}{l}{Solve the over-determined linear system}\\

\multicolumn{6}{c}{$\widetilde{L}_{m}(\widetilde
c_{1},\cdots,\widetilde c_{m})^{T}=\tilde {\rm
f}$}\\

\multicolumn{6}{l}{via least square, i.e., compute}\\

\multicolumn{6}{c}{$(\widetilde c_{1},\cdots,\widetilde
c_{m})^{T}=(\widetilde{L}^{*}_{m}\widetilde{L}_{m})^{-1}\widetilde{L}_{m}^{*}\
\tilde {\rm
f}$.}\\

\multicolumn{6}{l}{Output: $\widetilde c=(\widetilde c_{1},\cdots,\widetilde c_{m})^{T}\in\Bbb C^m$ coefficients of
 the approximant $ S_{\rm X}^{m}(f):=\sum\limits_{k=1}^{m}\widetilde c_{k}\eta_{k,d}$ }\\

 \multicolumn{6}{r} {which is the unique solution of \eqref{3.5}.} \\

\bottomrule
\end{tabular}
\end{table}

\noindent{\it Proof of Theorem 2.2. }

We have
$$(e^{\rm ran}(n,d;\Lz^{\rm std}))^2\le \Bbb E(\|f- S_{\rm X}^{m}(f)\|_{G_d}^2 \ \big|\ \|\widetilde{H}_{m}-I_{m}\|\le 1/2),$$
where $m,n\in\Bbb N$ satisfy \eqref{3.2}.  We estimate $\|f-
S_{\rm X}^{m}(f)\|_{G_d}^2$. We set $$H_d=L_2(D_d, \mu_{m,d}).$$
We recall that $\{e_{j,d}\}_{j=1}^\infty$ is an orthonormal basis
in $F_d$,  $\{\eta_{j,d}\}_{j=1}^\infty$ is an orthonormal system
in $G_d=L_2(D_d,\rho_d(x)dx)$, and  $\{\tilde
\eta_{j,d}\}_{j=1}^\infty$ is an orthonormal system in
$H_d=L_2(D_d,\mu_{m,d})$, where $$\eta_{j,d}=\lz_{j,d}^{-1/2}
e_{j,d},\ \ \  \tilde \eta_{j,d}:=\frac {\eta_{j,d}}
{\sqrt{h_{m,d}}} .$$

For $f\in F_d$ with $\|f\|_{F_d}\le 1$, we have
$$f=\sum_{k=1}^\infty \langle f,
e_{k,d} \rangle_{F_d}\, e_{k,d}=\sum_{k=1}^\infty \langle f,
\eta_{k,d} \rangle_{G_d}\, \eta_{k,d},$$and
$$\|f\|_{F_d}^2=\sum_{k=1}^\infty |\langle f, e_{k,d}
\rangle_{F_d}|^2= \sum_{k=1}^\infty \lz_{k,d}^{-1}|\langle f,
\eta_{k,d} \rangle_{G_d}|^2.$$

We note that $f-S_{m,d}^*(f)$ is orthogonal to the space $V_m$
with respect to the inner product
$\langle\cdot,\cdot\rangle_{G_d}$, and
$$S_{m,d}^*(f)-S^{m}_{\rm X}(f)=S^{m}_{\rm X}(f-S_{m,d}^*(f))\in V_m:={\rm span}\{\eta_{1,d},\dots, \eta_{m,d}\}, $$
where $$S_{m,d}^*(f)=\sum\limits_{k=1}^{m}\langle
f,\eta_{k,d}\rangle_{G_d}\eta_{k,d}.$$ It follows that
\begin{align*}\|f-S_{\rm
X}^{m}(f)\|_{G_d}^2&=\|f-S_{m,d}^*(f)\|_{G_d}^2+\|S_{\rm
X}^{m}(f-S_{m,d}^*(f))\|_{G_d}^2\\ &=\|g\|_{G_d}^2+\|S_{\rm
X}^{m}(g)\|_{G_d}^2,\end{align*}where $g=f-S_{m,d}^*(f)$.

We recall that
$$S_{\rm X}^m(g)=\sum_{k=1}^m\widetilde{c}_k\eta_{k,d},\,\widetilde{c}=(\widetilde{c}_1,\dots,\widetilde{c}_m)^T
=((\widetilde{L}_{m})^{*}\widetilde{L}_{m})^{-1}(\widetilde{L}_{m})^{*}\widetilde{\rm
g},$$where
$$\widetilde{\rm g}=(\widetilde{g}(x^1),\cdots,\widetilde{g}(x^n))^T,\ \ \ \widetilde{g}=\frac{g}{\sqrt{h_{m,d}}}.$$

 Since $\{\eta_{k,d}\}_{k=1}^{\infty}$ is an orthonormal
system in $G_d$, we get
\begin{align*}
\|S_{\rm X}^{m}(g)\|_{G_d}^2=\|\widetilde{c}\|_{2}^2&=
\|((\widetilde{L}_{m})^{*}\widetilde{L}_{m})^{-1}(\widetilde{L}_{m})^{*}\widetilde{\rm g}\|_{2}^2\\
&\leq\|((\widetilde{L}_{m})^{*}\widetilde{L}_{m})^{-1}\|\cdot\|(\widetilde{L}_{m})^{*}\widetilde{\rm g}\|_{2}^{2}\\
&\leq\frac{4}{n^{2}}\|(\widetilde{L}_{m})^{*}\widetilde{\rm g}\|_{2}^{2},
\end{align*}
where $\|\cdot\|_2$ is the Euclidean norm of a vector.  We have
\begin{align*}
\|(\widetilde{L}_{m})^{*}\widetilde{\rm
g}\|_{2}^{2}&=\sum_{k=1}^{m}\Big|\sum_{j=1}^{n}\overline{\widetilde{\eta}_{k,d}(x^{j})}\cdot
\widetilde{g}(x^{j})\Big|^{2}\\
&=\sum_{k=1}^{m}\sum_{j=1}^{n}\sum_{i=1}^{n}\overline{\widetilde{\eta}_{k,d}(x^{j})}
\widetilde{g}(x^{j})\widetilde{\eta}_{k,d}(x^{i})\overline{\widetilde{g}(x^{i})}.
\end{align*}
It follows that
\begin{align*}
J&=\int_{\|\widetilde{H}_{m}-I_{m}\|\leq\frac{1}{2}}\|(\widetilde{L}_{m})^{*}\widetilde{\rm g}\|_{2}^{2}\,d\mu_{m,d}(x^{1})\ldots d\mu_{m,d}(x^{n})\\
&\le\int_{D_d^n}\|(\widetilde{L}_{m})^{*}\widetilde{\rm g}\|_{2}^{2}\,d\mu_{m,d}(x^{1})\ldots d\mu_{m,d}(x^{n})\\
&\le\sum_{k=1}^{m}\sum_{i,j=1}^{n}\int_{D_d^n}\overline{\widetilde{\eta}_{k,d}(x^{j})}
\widetilde{g}(x^{j})\widetilde{\eta}_{k,d}(x^{i})\overline{\widetilde{g}(x^{i})}
\,d\mu_{m,d}(x^{1})\ldots d\mu_{m,d}(x^{n})\\
&=\sum_{k=1}^{m}\sum_{i,j=1}^{n}J_{k,i,j},
\end{align*}
where$$J_{k,i,j}=\int_{D_d^n}\overline{\widetilde{\eta}_{k,d}(x^{j})}
\widetilde{g}(x^{j})\widetilde{\eta}_{k,d}(x^{i})\overline{\widetilde{g}(x^{i})}
\,d\mu_{m,d}(x^{1})\ldots d\mu_{m,d}(x^{n}).$$ If $i\neq j$ and
$1\le k\le m$, then
$$J_{k,i,j}=|\langle\widetilde{g},\widetilde{\eta}_{k,d}\rangle_{H_d}|^2
=|\langle g,\eta_{k,d}\rangle_{G_d}|^2=0;$$
If $i=j$, then
$$J_{k,i,j}=\|\widetilde{\eta}_{k,d}\cdot\widetilde{g}\|_{H_d}^2.$$
Since
$h_{m,d}(x)=\frac{1}{m}\sum\limits_{k=1}^{m}|\eta_{k,d}(x)|^{2}$,
we get
\begin{align*}
J&\le \sum_{k=1}^{m}\sum_{i=j=1}^{n}J_{k,i,j}=n\sum_{k=1}^m\|\widetilde{\eta}_{k,d}\cdot\widetilde{g}\|_{H_d}^2\\
&=n\sum_{k=1}^m\int_{D_d^n}|\widetilde{g}(x)\widetilde{\eta}_{k,d}(x)|^2
\rho_d(x)h_{m,d}(x)\,dx\\
&=n\sum_{k=1}^m\int_{D_d^n}\frac{|g(x)\eta_{k,d}(x)|^2}{h_{m,d}(x)}
\rho_d(x)\,dx\\
&=n\int_{D_d^n}m|g(x)|^2\rho_d(x)\,dx\\
&=nm\cdot\|g\|_{G_d}^2.
\end{align*}
Hence, by \eqref{2.2-0} we have
\begin{align*}&\qquad \int_{\|\widetilde{H}_{m}-I_{m}\|\leq\frac{1}{2}}\|f-S_X^m(f)\|_{G_d}^{2}\,d\mu_{m,d}(x^{1})\ldots
d\mu_{m,d}(x^{n})\\
&\le\|g\|_{G_d}^2+\frac{4}{n^2}J\le(1+\frac{4m}{n})\|g\|_{G_d}^2
\le(1+\frac{4m}{n})(e^{\rm wor}(m,d;\Lz^{\rm all}))^2.
\end{align*}
We conclude that
$$\mathbb{E}\left(\|f- S_{\rm X}^{m}(f)\|_{G_d}^2\ \Big|\ \|\widetilde{H}_{m}-I_{m}\|\le 1/2\right)$$
\begin{align*}
&=\frac{\int_{\|\widetilde{H}_{m}-I_{m}\|\leq\frac{1}{2}}\|f-S_{X}^{m}f\|^{2}_{G_d}
d\mu_{m,d}(x^{1})\ldots d\mu_{m,d}(x^{n})}{\mathbb{P}(\|\widetilde{H}_{m}-I_{m}\|\leq\frac{1}{2})}\\
&\leq\left(1+\frac{4m}{n}\right)\frac{1}{1-\delta}\,(e^{\rm
wor}(m,d;\Lz^{\rm all}))^2,
\end{align*}
where in the last inequality we used \eqref{3.4}. This completes
the proof of Theorem 2.2. $\hfill\Box$

\

\noindent{\it Proof of Theorem 2.3.}\

Applying Theorem 2.1 with $\delta=\frac{1}{2^{\sqrt{2}}}$, we
obtain
\begin{equation}
e^{\rm ran}(n,d;\Lz^{\rm
std})\leq\Big(1+\frac{4m}{n}\Big)^{\frac{1}{2}}\Big(\frac{2^{\sqrt{2}}}{2^{\sqrt{2}}-1}\Big)^{\frac12}e^{\rm
wor}(m,d;\Lz^{\rm all}),\label{4.1}
\end{equation}
where $m,n\in\Bbb N$, and $$
m=\Big\lfloor\frac{n}{48\sqrt{2}\ln(4n)}\Big\rfloor.$$ Since
$1+\frac{4m}{n}\leq1+\frac{1}{12\sqrt{2}\ln(4n)}\leq2$, by
\eqref{4.1} we get
\begin{equation}\label{4.01}e^{\rm ran}(n,d;\Lz^{\rm std})\leq 4e^{\rm wor}(m,d;\Lz^{\rm
all}).\end{equation} It follows that
\begin{align}
n^{\rm ran,\star}(\varepsilon,d;\Lambda^{\rm std})&=\min\big\{n\, \big|\,  e^{\rm ran}(n,d;\Lz^{\rm std})\leq\varepsilon{\rm CRI}_d\big\}\nonumber\\
&\leq\min\big\{n\,\big|\, 4e^{\rm wor}(m,d;\Lz^{\rm all})\leq\varepsilon{\rm CRI}_d\big\}\nonumber\\
&=\min\big\{n\mid e^{\rm wor}(m,d;\Lz^{\rm
all})\leq\frac{\varepsilon}{4}{\rm CRI}_d\big\}.\label{4.2}
\end{align}
We note that
$$m=\Big\lfloor\frac{n}{48\sqrt{2}\ln(4n)}\Big\rfloor\geq\frac{n}{48\sqrt{2}\ln(4n)}-1.$$
This inequality is equivalent to
\begin{equation}
4n\leq192\sqrt{2}(m+1)\ln(4n).\label{4.3}
\end{equation}
Taking logarithm on both sides of \eqref{4.3}, and using the
inequality $\ln x\leq\frac{1}{2}x$ for $x\ge 1$, we get
$$\ln(4n)\leq \ln(m+1)+\ln(192\sqrt{2})+\ln\ln(4n),$$and
$$\frac{1}{2}\ln(4n)\leq\ln(4n)-\ln\ln(4n)\leq\ln(m+1)+\ln(192\sqrt{2}).$$
It follows  from \eqref{4.3} that
\begin{equation}
n\leq96\sqrt{2}(m+1)(\ln(m+1)+\ln(192\sqrt{2})).\label{4.4}
\end{equation}
By \eqref{4.2} and \eqref{4.4} we obtain $$n^{\rm
ran,\star}(\varepsilon,d;\Lambda^{\rm std})\le 96\sqrt2
\Big(n^{\rm wor,\star}(\frac\varepsilon4,d;\Lambda^{\rm
all})+1\Big) \Big(\ln\big(n^{\rm
wor,\star}(\frac\varepsilon4,d;\Lambda^{\rm
all})+1\big)+\ln(192\sqrt{2})\Big),$$proving \eqref{2.14}.

 For sufficiently small $\delta>0$ and
 $m,n\in\mathbb{N}$ satisfying
$$m=\left\lfloor\frac{n}{48(\sqrt{2}\ln(2n)-\ln{\delta})}\right\rfloor,$$
by Theorem 2.2 we have
\begin{align*}
e^{\rm ran}(n,d;\Lz^{\rm std})&\le\Big(1+\frac{4m}{n}\Big)^{\frac12}\frac{1}{\sqrt{1-\delta}}\,e^{\rm wor}(m,d;\Lz^{\rm all})\\
&\le\Big(1+\frac{1}{12\big(\sqrt{2}\ln(2n)+\ln{\frac{1}{\delta}}\big)}\Big)^{\frac12}\frac{1}{\sqrt{1-\delta}}\,e^{\rm wor}(m,d;\Lz^{\rm all})\\
&\le\Big(1+\frac{1}{12\ln{\frac{1}{\delta}}}\Big)^{\frac12}\frac{1}{\sqrt{1-\delta}}\,e^{\rm
wor}(m,d;\Lz^{\rm all})=A_{\delta}\,e^{\rm wor}(m,d;\Lz^{\rm
all}),
\end{align*}
where
$A_{\delta}=\Big(1+\frac{1}{12\ln{\frac{1}{\delta}}}\Big)^{\frac12}\frac{1}{\sqrt{1-\delta}}$.

Using the same method used in the proof of \eqref{4.2}, we have
$$ n^{\rm ran,\star}(\varepsilon,d;\Lambda^{\rm std})\le \min\big\{n\mid e^{\rm wor}(m,d;\Lz^{\rm
all})\leq\frac{\varepsilon}{A_\dz}{\rm CRI}_d\big\}.$$ We note
that
$$n\le48\big(\sqrt{2}\ln(2n)+\ln{\frac{1}{\delta}}\big)(m+1).$$
Taking logarithm on both sides, and using the inequalities $\ln
x\leq\frac{x}{4}$ for $x\ge 9$ and $a+b\le ab$ for $a,b\ge2$, we
get
\begin{align*}
\ln n&\le \ln48+\ln\big(\sqrt{2}\ln(2n)+\ln{\frac{1}{\delta}}\big)+\ln(m+1)\\
&\le \ln48+\ln(\sqrt{2}\ln(2n))+\ln\ln{\frac{1}{\delta}}+\ln(m+1)\\
&\le \ln48+\frac{\sqrt{2}}{4}\ln(2n)+\ln\ln{\frac{1}{\delta}}+\ln(m+1).
\end{align*}
Since
$$\frac{\sqrt{2}}{4}\ln(2n)\le \ln n-\frac{\sqrt{2}}{4}\ln(2n)\ \ \text{for}\ \ n\ge9,$$
we get
$$\sqrt{2}\ln(2n)\le 4\big(\ln48+\ln\ln{\frac{1}{\delta}}+\ln(m+1)\big).$$
It follows that
$$n\le48\big(4\big(\ln48+\ln\ln{\frac{1}{\delta}}+\ln(m+1)\big)+\ln{\frac{1}{\delta}}\big)(m+1).$$
We conclude that for sufficiently small $\delta>0$,
\begin{align}\label{4.4-1}
n^{\rm ran,\star}(\varepsilon,d;\Lambda^{\rm std}) \le
48\Big(4\big(\ln48&+\ln\ln{\frac{1}{\delta}}+ \ln\big(n^{\rm
wor,\star}(\frac{\varepsilon}{A_\delta},d;\Lambda^{\rm
all})+1\big)\big)\\ &+\ln{\frac{1}{\delta}}\Big) \big(n^{\rm
wor,\star}(\frac{\varepsilon}{A_\delta},d;\Lambda^{\rm
all})+1\big),\nonumber
\end{align}proving \eqref{2.15}. Theorem 2.3 is proved.  $\hfill\Box$

\section{Equivalence results of algebraic tractability}

First we   consider the equivalences of  ALG-PT and ALG-SPT for
$\Lambda^{\rm std}$ and $\Lambda^{\rm all}$ in the randomized
setting.    The equivalent results for the normalized error
criterion can be found in \cite[Theorem 22.19]{NW3}.  For the
absolute error criterion, \cite[Theorem 22.20]{NW3} shows the
equivalence of ALG-PT under the condition
\begin{equation}\label{4.11}\lz_{1,d}\le C_\lz d^{s_\lz} \ \ {\rm for \ all}\ d\in\Bbb N, \ {\rm  some}\ C_\lz>0,\ {\rm and  \ some}\ s_\lz\ge 0,\end{equation}
and the equivalence of ALG-SPT under the condition \eqref{4.11}
with $s_\lz=0$.

 We obtain the following  equivalent
results of ALG-PT and  ALG-SPT  without any condition. Hence, the
condition \eqref{4.11} is unnecessary. This solves Open Problem
101 as posed by Novak and Wo\'zniakowski in  \cite{NW3}.

\begin{thm}
We consider the problem ${\rm APP}=\{{\rm APP}_d\}_{d\in \Bbb N}$
in the randomized  setting for the absolute error criterion. Then,

$\bullet$   ${\rm ALG}$-${\rm PT}$ for $\Lambda^{\rm all}$  is
equivalent to ${\rm ALG}$-${\rm PT}$
 for $\Lambda^{\rm std}$ .

$\bullet$  ${\rm ALG}$-${\rm SPT}$  for $\Lambda^{\rm all}$ is
equivalent to ${\rm ALG}$-${\rm SPT}$  for $\Lambda^{\rm std}$. In
this case,  the exponents of ${\rm ALG}$-${\rm SPT}$ for
$\Lambda^{\rm all}$ and  $\Lambda^{\rm std}$ are the same.
\end{thm}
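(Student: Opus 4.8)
The plan is to establish each equivalence through its two implications, where the inclusion $\Lambda^{\rm std}\subset\Lambda^{\rm all}$ supplies one direction essentially for free and the deeper inequality \eqref{2.17} supplies the other. The easy direction, ``tractability for $\Lambda^{\rm std}$ implies tractability for $\Lambda^{\rm all}$'', is immediate from \eqref{2.4-1}: since $n^{\rm ran,ABS}(\varepsilon,d;\Lambda^{\rm all})\le n^{\rm ran,ABS}(\varepsilon,d;\Lambda^{\rm std})$ for all $\varepsilon,d$, any bound of the form $Cd^{q}\varepsilon^{-p}$ (respectively $C\varepsilon^{-p}$) for $\Lambda^{\rm std}$ transfers verbatim to $\Lambda^{\rm all}$. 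In particular this already yields ${\rm ALG}\text{-}p^{\rm ran,ABS}(\Lambda^{\rm all})\le {\rm ALG}\text{-}p^{\rm ran,ABS}(\Lambda^{\rm std})$ for the SPT exponents.

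For the converse I would pass through the worst case setting. Suppose ${\rm APP}$ is ALG-PT for $\Lambda^{\rm all}$ in the randomized setting. By Corollary 2.1 this is equivalent to ALG-PT for $\Lambda^{\rm all}$ in the worst case setting, so there are constants $C,p,q>0$ with $n^{\rm wor,ABS}(\varepsilon,d;\Lambda^{\rm all})\le Cd^{q}\varepsilon^{-p}$. Inserting this into \eqref{2.17} with the fixed choice $\omega=1$ gives
\[ n^{\rm ran,ABS}(\varepsilon,d;\Lambda^{\rm std})\le C_{1}\Big(Cd^{q}4^{p}\varepsilon^{-p}+1\Big)^{2}\le \widetilde{C}\,d^{2q}\varepsilon^{-2p}, \]
which is precisely ALG-PT for $\Lambda^{\rm std}$. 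Taking $q=0$ throughout (ALG-SPT carries no $d$-dependence) shows in the same way that ALG-SPT for $\Lambda^{\rm all}$ implies ALG-SPT for $\Lambda^{\rm std}$.

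The most delicate point is the equality of the SPT exponents, and here the power $1+\omega$ in \eqref{2.17}---as opposed to a hard power $1$---is exactly what is needed. Write $p_{\rm all}={\rm ALG}\text{-}p^{\rm ran,ABS}(\Lambda^{\rm all})$, which by Corollary 2.1 equals the corresponding worst case exponent. Then for every $\tau>0$ there is $C_{\tau}$ with $n^{\rm wor,ABS}(\varepsilon,d;\Lambda^{\rm all})\le C_{\tau}\varepsilon^{-(p_{\rm all}+\tau)}$, and substituting into \eqref{2.17} produces $n^{\rm ran,ABS}(\varepsilon,d;\Lambda^{\rm std})\le \widetilde{C}_{\omega,\tau}\,\varepsilon^{-(p_{\rm all}+\tau)(1+\omega)}$, whence ${\rm ALG}\text{-}p^{\rm ran,ABS}(\Lambda^{\rm std})\le (p_{\rm all}+\tau)(1+\omega)$. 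Letting $\tau\to0^{+}$ and $\omega\to0^{+}$ forces ${\rm ALG}\text{-}p^{\rm ran,ABS}(\Lambda^{\rm std})\le p_{\rm all}$, and together with the reverse inequality from the easy direction this gives the claimed equality. I expect the only genuine obstacle to lie in this limiting step: one must check that the additive $+1$ and the multiplicative constants $C_{\omega}$ and $C_{\tau}$ are absorbed into the polynomial bounds and never leak into the exponent, since all the substantive analysis has already been carried out in establishing \eqref{2.17}.
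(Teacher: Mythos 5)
Your proposal is correct and follows essentially the same route as the paper's proof: the easy direction via $\Lambda^{\rm std}\subset\Lambda^{\rm all}$ (inequality \eqref{2.4-1}), the converse by passing to the worst case setting through Corollary 2.1 and substituting the polynomial bound into \eqref{2.17}, and the equality of SPT exponents by exploiting that $\omega$ in \eqref{2.17} can be taken arbitrarily small. The only cosmetic difference is that you fix $\omega=1$ in the PT step (the paper keeps $\omega$ general there), which is immaterial since any polynomial bound suffices for PT.
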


\begin{proof} It follows from \eqref{2.4-1} that ALG-PT (ALG-SPT) for $\Lambda^{\rm std}$ means ALG-PT (ALG-SPT) for  $\Lambda^{\rm all}$ in the randomized  setting.
Since ALG-PT (ALG-SPT) for $\Lambda^{\rm all}$ in the worst case
setting is equivalent to ${\rm ALG}$-${\rm PT}$ (ALG-SPT) for
$\Lambda^{\rm all}$
 in the randomized  setting, it suffices to show that ALG-PT (ALG-SPT) for $\Lambda^{\rm all}$ in the worst case
setting means that ALG-PT (ALG-SPT) for  $\Lambda^{\rm std}$ in
the randomized  setting.

Suppose that ALG-PT  holds for $\Lambda^{\rm all}$ in the worst
case setting. Then there exist $ C\ge1 $ and non-negative $ p,q$
such that \begin{equation}n^{\rm
wor,ABS}(\varepsilon,d;\Lambda^{\rm all})\leq
Cd^{q}\varepsilon^{-p},\ \  \text{for all}\ \ d\in\mathbb{N},\
\varepsilon\in(0,1).\label{4.6-0}\end{equation} It follows from
\eqref{2.17} and \eqref{4.6-0} that
\begin{align*}
n^{\rm ran, ABS}(\varepsilon,d;\Lambda^{\rm
std})&\leq C_\oz \(Cd^{q}(\frac{\varepsilon}{4})^{-p}+1\)^{1+\oz}\\
&\le C_\oz (2C\, 4^{p})^{1+\oz}d^{q(1+\oz)}\vz^{-p(1+\oz)},
\end{align*}which means that ALG-PT  holds for  $\Lambda^{\rm std}$ in
the randomized  setting.

If ALG-SPT holds  for $\Lambda^{\rm all}$ in the worst case
setting, then \eqref{4.6-0} holds with $q=0$. Using the same
method we obtain
$$ n^{\rm ran, ABS}(\varepsilon,d;\Lambda^{\rm
std})\le C_\oz(2C\, 4^{p})^{1+\oz}\vz^{-p(1+\oz)},$$which means
that ALG-SPT  holds for  $\Lambda^{\rm std}$ in the randomized
setting. Furthermore, since $\oz$ can be arbitrary small, by
Corollary 2.1 we have
\begin{align*} {\rm ALG\!-\!}p^{\rm ran, ABS}(\Lz^{\rm std})&\le
{\rm ALG\!-\!}p^{\rm wor, ABS}(\Lz^{\rm all})\\  ={\rm
ALG\!-\!}p^{\rm ran, ABS}(\Lz^{\rm all})&\le {\rm ALG\!-\!}p^{\rm
ran, ABS}(\Lz^{\rm std}),
\end{align*} which means that the exponents of ${\rm ALG}$-${\rm
SPT}$ for $\Lambda^{\rm all}$ and  $\Lambda^{\rm std}$ are the
same. This completes the proof of Theorem 4.1.
\end{proof}

Next we consider the equivalence of  ALG-QPT for $\Lambda^{\rm
std}$ and $\Lambda^{\rm all}$ in the randomized setting.
 The result for the
normalized error criterion can be found in \cite[Theorem
22.21]{NW3}. For the absolute error criterion, \cite[Theorem
22.22]{NW3} shows the equivalence of ALG-QPT under the condition
\begin{equation}\label{4.12}\underset{d\to\infty}{\lim \sup}\ \lz_{1,d}<\infty.\end{equation}

 We obtain the following  equivalent
result of ALG-QPT without any condition. Hence, the condition
\eqref{4.12} is unnecessary.  This solves Open Problem 102 as
posed by Novak and Wo\'zniakowski in  \cite{NW3}.

\begin{thm}
We consider the problem $\rm APP=\{APP_d\}_{d\in\mathbb{N}}$ in the randomized setting for the absolute error criterion.
 Then, ${\rm ALG}$-${\rm QPT}$ for $\Lambda^{\rm all}$  is
equivalent to ${\rm ALG}$-${\rm QPT}$
 for $\Lambda^{\rm std}$. In this case,  the exponents
 of ${\rm ALG}$-${\rm QPT}$ for
$\Lambda^{\rm all}$ and  $\Lambda^{\rm std}$ are the same.
\end{thm}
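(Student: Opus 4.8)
The plan is to follow the two-step scheme of the proof of Theorem 4.1, reducing to the worst case setting for $\Lz^{\rm all}$ through Corollary 2.1 and then transferring to $\Lz^{\rm std}$ in the randomized setting through Corollary 2.2. Since $\Lz^{\rm std}\subset\Lz^{\rm all}$, inequality \eqref{2.4-1} already gives that ALG-QPT for $\Lz^{\rm std}$ implies ALG-QPT for $\Lz^{\rm all}$, so only the reverse implication is substantive. Assuming ALG-QPT for $\Lz^{\rm all}$ in the randomized setting, Corollary 2.1 supplies constants $C\ge1$ and $t\ge0$ with
$$n^{\rm wor,ABS}(\varepsilon,d;\Lz^{\rm all})\le C\exp\bl( t(1+\ln d)(1+\ln\varepsilon^{-1}) \br),\qquad d\in\NN,\ \varepsilon\in(0,1).$$

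For the equivalence in the first bullet I would use the crude inequality \eqref{2.17}. Evaluating the bound above at $\varepsilon/4$ and using
$$1+\ln(4\varepsilon^{-1})=1+\ln4+\ln\varepsilon^{-1}\le(1+\ln4)(1+\ln\varepsilon^{-1}),$$
then inserting this into \eqref{2.17} and absorbing the additive $+1$ into the prefactor (legitimate since $C\ge1$ and the exponential is at least $1$), I expect to reach
$$n^{\rm ran,ABS}(\varepsilon,d;\Lz^{\rm std})\le C_\oz(2C)^{1+\oz}\exp\bl( t(1+\ln4)(1+\oz)(1+\ln d)(1+\ln\varepsilon^{-1}) \br),$$
which exhibits ALG-QPT for $\Lz^{\rm std}$, albeit with the enlarged exponent $t(1+\ln4)(1+\oz)$.

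The hard part will be the exponent equality in the second bullet: the $\varepsilon/4$ shift corrupts the factor $1+\ln\varepsilon^{-1}$ multiplicatively against $1+\ln d$, producing the spurious $(1+\ln4)$ above, and this cannot be swept into the constant. To eliminate it I would replace \eqref{2.17} by the refined inequality \eqref{2.18}, whose shift is by $A_\dz=\bl(1+\tfrac1{12\ln(1/\dz)}\br)^{1/2}(1-\dz)^{-1/2}$, and the key observation is that $A_\dz\to1$ as $\dz\to0^{+}$. Repeating the previous estimate with $1+\ln(A_\dz\varepsilon^{-1})\le(1+\ln A_\dz)(1+\ln\varepsilon^{-1})$ gives ALG-QPT for $\Lz^{\rm std}$ with exponent at most $t(1+\ln A_\dz)(1+\oz)$, hence
$${\rm ALG}\!-\!t^{\rm ran,ABS}(\Lz^{\rm std})\le t(1+\ln A_\dz)(1+\oz).$$
Letting $\dz\to0^{+}$ (so $\ln A_\dz\to0$), then $\oz\to0^{+}$, and finally taking the infimum over admissible $t$, I obtain ${\rm ALG}\!-\!t^{\rm ran,ABS}(\Lz^{\rm std})\le{\rm ALG}\!-\!t^{\rm wor,ABS}(\Lz^{\rm all})={\rm ALG}\!-\!t^{\rm ran,ABS}(\Lz^{\rm all})$, the last equality being Corollary 2.1. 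Since the reverse exponent inequality is immediate from \eqref{2.4-1}, the two exponents coincide, completing the proof.
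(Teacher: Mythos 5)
Your proposal is correct and follows essentially the same route as the paper's proof of this theorem: reduce via Corollary 2.1 to ALG-QPT for $\Lambda^{\rm all}$ in the worst case setting, apply \eqref{2.17} to get ALG-QPT for $\Lambda^{\rm std}$ (with the parasitic factor $(1+\ln 4)(1+\omega)$), and then switch to the refined inequality \eqref{2.18}, exploiting $A_\delta\to1$ as $\delta\to0^{+}$ so that $(1+\omega)(1+\ln A_\delta)\to1$, to obtain the equality of the QPT exponents. The only blemish is a citation slip (the transfer inequalities \eqref{2.17}--\eqref{2.18} are in Corollary 2.4, not Corollary 2.2), which does not affect the argument.
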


\begin{proof}Similar to the proof of Theorem 4.1,
 it is enough to prove that ALG-QPT for
$\Lambda^{\rm all}$ in the worst case setting implies ALG-QPT for
 $\Lambda^{\rm std}$ in the randomized setting.

Suppose that  ALG-QPT holds for $\Lambda^{\rm all}$ in the worst
case setting. Then  there exist $ C\ge 1$ and non-negative $t$
such that \begin{equation}\label{4.9}n^{\rm
wor,ABS}(\varepsilon,d;\Lambda^{\rm all})\leq C
\exp(t(1+\ln{d})(1+\ln{\varepsilon^{-1}})),\ \text{for all}\
d\in\mathbb{N},\ \varepsilon\in(0,1).\end{equation} It follows
from \eqref{2.17} and \eqref{4.9} that for $\oz>0$,
\begin{align*}n^{\rm ran,ABS}(\varepsilon,d;\Lambda^{\rm std})&\le
C_\oz \( n^{\rm wor,ABS}(\varepsilon/4,d;\Lambda^{\rm
all})+1\)^{1+\oz}
\\
&\leq C_\oz\(C
\exp\big(t(1+\ln{d})\big(1+\ln\big(\frac{\varepsilon}{4}\big)^{-1})\big)+1\)^{1+\oz}
\\ &\leq C_\oz (2C)^{1+\oz}\exp\big(
(1+\oz)t(1+\ln{d})(1+\ln4+\ln\varepsilon^{-1})\big)\\ &\le C_\oz
(2C)^{1+\oz}\exp\big( t^*(1+\ln{d})(1+\ln\varepsilon^{-1})\big)
,\end{align*}where $t^*=(1+\oz)(1+\ln 4)t$.  This implies that
ALG-QPT holds for
 $\Lambda^{\rm std}$ in the randomized setting.

Next we show that  the exponents ALG-$t^{\rm ran, ABS}(\Lz^{\rm
all})$ and ALG-$t^{\rm ran, ABS}(\Lz^{\rm std})$ are equal if
ALG-QPT holds for $\Lambda^{\rm all}$ in the worst case setting.
We have
\begin{align*}  {\rm ALG\!-\!}t^{\rm wor, ABS}(\Lz^{\rm
all})  ={\rm ALG\!-\!}t^{\rm ran, ABS}(\Lz^{\rm all})\le {\rm
ALG\!-\!}t^{\rm ran, ABS}(\Lz^{\rm std}).\end{align*} It suffices
to show that
$${\rm
ALG\!-\!}t^{\rm ran, ABS}(\Lz^{\rm std})\le {\rm ALG\!-\!}t^{\rm
wor, ABS}(\Lz^{\rm all}) .$$ Note that using \eqref{2.17} we can
only obtain that
$${\rm
ALG\!-\!}t^{\rm ran, ABS}(\Lz^{\rm std})\le (1+\ln 4)\cdot {\rm
ALG\!-\!}t^{\rm wor, ABS}(\Lz^{\rm all}) .$$ Instead we use
\eqref{2.18}. For sufficiently small $\delta>0$ and $\oz>0$,  it
follows from \eqref{2.18} and \eqref{4.9} that
\begin{align*}
n^{\rm ran,ABS}(\varepsilon,d;\Lambda^{\rm std})& \le C_{\oz,\dz}
 \big(n^{\rm
wor,ABS}(\frac{\varepsilon}{A_\delta},d;\Lambda^{\rm
all})+1\big)^{1+\oz}\\ &\le C_{\oz,\dz}(2C)^{1+\oz}\exp\big(
(1+\oz)t(1+\ln{d})(1+\ln A_\dz +\ln\varepsilon^{-1})\big)\\&\le
C_{\oz,\dz}(2C)^{1+\oz}\exp\big( (1+\oz)t(1+\ln
A_\dz)(1+\ln{d})(1+\ln\varepsilon^{-1})\big),
\end{align*}where
$A_{\delta}=\big(1+\frac{1}{12\ln{\frac{1}{\delta}}}\big)^{\frac12}\frac{1}{\sqrt{1-\delta}}$.
Taking the infimum over $t$ for which \eqref{4.9} holds, and
noting that $ \lim\limits_{(\dz,\oz)\to (0,0)}(1+\oz)(1+\ln
A_\dz)=1$,
 we get that
\begin{align*}{\rm ALG\!-\!}t^{\rm ran, ABS}(\Lz^{\rm std})\le {\rm ALG\!-\!}t^{\rm
wor, ABS}(\Lz^{\rm all}).
\end{align*}This completes the proof of Theorem 4.2.
\end{proof}

Now we   consider the equivalence of  ALG-WT for $\Lambda^{\rm
std}$ and $\Lambda^{\rm all}$ in the randomized setting.
 The result for the normalized error criterion can be
found in \cite[Theorem 22.5]{NW3}. For the absolute error
criterion, \cite[Theorem 22.6]{NW3} shows the equivalence of
ALG-WT under the condition
\begin{equation}\label{4.13}\lim_{d\to\infty}\frac{\ln\max(\lz_{1,d},1)}{d}=0.\end{equation}

We obtain the following  equivalent result of ALG-WT without any
condition. Hence, the condition \eqref{4.13} is unnecessary.  This
solves Open Problem 98 as posed by Novak and Wo\'zniakowski in
\cite{NW3}.

\begin{thm}
We consider the problem $\rm APP=\{APP_d\}_{d\in\mathbb{N}}$ in
the randomized setting for the absolute error criterion. Then,
${\rm ALG}$-${\rm WT}$ for $\Lambda^{\rm all}$  is equivalent to
${\rm ALG}$-${\rm WT}$
 for $\Lambda^{\rm std}$.
\end{thm}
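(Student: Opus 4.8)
The plan is to follow the same template already used successfully for ALG-PT, ALG-SPT, and ALG-QPT in Theorems 4.1 and 4.2. By \eqref{2.4-1}, ALG-WT for $\Lambda^{\rm std}$ trivially implies ALG-WT for $\Lambda^{\rm all}$ in the randomized setting, and by Corollary 2.1 the latter is equivalent to ALG-WT for $\Lambda^{\rm all}$ in the worst case setting. Hence it suffices to prove the one nontrivial direction: ALG-WT for $\Lambda^{\rm all}$ in the worst case setting implies ALG-WT for $\Lambda^{\rm std}$ in the randomized setting.

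So first I would assume ALG-WT holds for $\Lambda^{\rm all}$ in the worst case setting, which by definition means
\begin{equation*}
\lim_{\varepsilon^{-1}+d\to\infty}\frac{\ln n^{\rm wor,ABS}(\varepsilon,d;\Lambda^{\rm all})}{\varepsilon^{-1}+d}=0.
\end{equation*}
Then I would invoke inequality \eqref{2.17} from Corollary 2.3 with a fixed small $\oz>0$, which bounds $n^{\rm ran,ABS}(\varepsilon,d;\Lambda^{\rm std})$ by $C_\oz\big(n^{\rm wor,ABS}(\varepsilon/4,d;\Lambda^{\rm all})+1\big)^{1+\oz}$. Taking logarithms gives
\begin{equation*}
\ln n^{\rm ran,ABS}(\varepsilon,d;\Lambda^{\rm std})\le \ln C_\oz+(1+\oz)\ln\big(n^{\rm wor,ABS}(\tfrac\varepsilon4,d;\Lambda^{\rm all})+1\big),
\end{equation*}
so dividing by $\varepsilon^{-1}+d$ and letting $\varepsilon^{-1}+d\to\infty$ kills the constant term and leaves a bounded multiple of the worst-case ratio.

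The one genuine technical point—the same one that appears throughout Section 4—is that the worst-case quantity on the right is evaluated at the shifted argument $\varepsilon/4$ rather than at $\varepsilon$. I would handle this exactly as in the proofs above: since $(\varepsilon/4)^{-1}+d=4\varepsilon^{-1}+d\le 4(\varepsilon^{-1}+d)$, the ALG-WT hypothesis applied along the sequence $(\varepsilon/4,d)$ still forces $\ln n^{\rm wor,ABS}(\varepsilon/4,d;\Lambda^{\rm all})/(\varepsilon^{-1}+d)\to 0$, because the denominator differs from $(\varepsilon/4)^{-1}+d$ only by a bounded factor. Thus the whole right-hand side, divided by $\varepsilon^{-1}+d$, tends to $0$, which is precisely ALG-WT for $\Lambda^{\rm std}$ in the randomized setting.

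I expect no serious obstacle here: unlike ALG-QPT in Theorem 4.2, there is no exponent to track, so the crude factor-of-$4$ shift in the argument and the harmless $(1+\oz)$ multiplier are immaterial for the weak-tractability limit. The only thing to be careful about is confirming that the limit is genuinely taken as $\varepsilon^{-1}+d\to\infty$ (not along some restricted path), and that replacing $\varepsilon$ by $\varepsilon/4$ preserves the divergence of the combined parameter; both are immediate from the linear comparison of denominators. This completes the argument.
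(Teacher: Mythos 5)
Your proposal is correct and follows essentially the same route as the paper: the paper proves Theorem 4.3 by citing its proof of Theorem 4.4 (the ALG-$(s,t)$-WT case) with $s=t=1$, which is exactly your argument—reduce to the nontrivial direction via \eqref{2.4-1} and Corollary 2.1, apply \eqref{2.17}, take logarithms, and absorb the shift from $\varepsilon$ to $\varepsilon/4$ by the bounded comparison of denominators (the paper's $4^s$ factor becoming your factor of $4$).
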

\begin{proof} The proof is identical to the proof of Theorem 4.4 with $s = t = 1$ for the absolute error criterion. We omit the details. \end{proof}

Finally, we   consider the equivalences of ALG-$(s, t)$-WT and
ALG-UWT for $\Lambda^{\rm std}$ and $\Lambda^{\rm all}$ in the
randomized setting. As far as we know, these equivalences have not
been studied yet. We obtain the following equivalent results of
ALG-$(s, t)$-WT and ALG-UWT for the absolute  or  normalized error
criterion without any condition.

\begin{thm}
We consider the problem $\rm APP=\{APP_d\}_{d\in\mathbb{N}}$ in
the randomized setting for the absolute or  normalized error
criterion. Then for
 fixed $s,t>0$, ${\rm ALG}$-$(s,t)$-${\rm WT}$ for $\Lambda^{\rm all}$  is
equivalent to ${\rm ALG}$-$(s,t)$-${\rm WT}$
 for $\Lambda^{\rm std}$.
\end{thm}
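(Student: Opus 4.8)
The plan is to follow the same three-step reduction used in the proofs of Theorems 4.1--4.3. Since $\Lambda^{\rm std}\subset\Lambda^{\rm all}$, inequality \eqref{2.4-1} gives $n^{\rm ran,\star}(\varepsilon,d;\Lambda^{\rm all})\le n^{\rm ran,\star}(\varepsilon,d;\Lambda^{\rm std})$ for $\star\in\{{\rm ABS,NOR}\}$, so ALG-$(s,t)$-WT for $\Lambda^{\rm std}$ in the randomized setting trivially implies ALG-$(s,t)$-WT for $\Lambda^{\rm all}$ in the randomized setting. By Corollary 2.1 the latter is equivalent to ALG-$(s,t)$-WT for $\Lambda^{\rm all}$ in the worst case setting. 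Hence it suffices to prove the one remaining implication: that ALG-$(s,t)$-WT for $\Lambda^{\rm all}$ in the worst case setting implies ALG-$(s,t)$-WT for $\Lambda^{\rm std}$ in the randomized setting.

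For this implication I would fix any $\omega>0$ (for instance $\omega=1$) and invoke \eqref{2.17}, which holds for both error criteria. Taking logarithms gives
\begin{equation*}
\ln n^{\rm ran,\star}(\varepsilon,d;\Lambda^{\rm std})\le \ln C_\omega+(1+\omega)\ln\Big(n^{\rm wor,\star}(\tfrac\varepsilon4,d;\Lambda^{\rm all})+1\Big).
\end{equation*}
Dividing by $\varepsilon^{-s}+d^{t}$, the term $\ln C_\omega/(\varepsilon^{-s}+d^{t})$ tends to $0$ as $\varepsilon^{-1}+d\to\infty$. For the remaining term the essential observation is the rescaling $\varepsilon\mapsto\varepsilon/4$: since $(\varepsilon/4)^{-s}=4^{s}\varepsilon^{-s}$ and $4^{s}\ge1$, one has $(\varepsilon/4)^{-s}+d^{t}\le 4^{s}(\varepsilon^{-s}+d^{t})$, whence
\begin{equation*}
\frac{\ln\big(n^{\rm wor,\star}(\tfrac\varepsilon4,d;\Lambda^{\rm all})+1\big)}{\varepsilon^{-s}+d^{t}}\le 4^{s}\,\frac{\ln\big(n^{\rm wor,\star}(\tfrac\varepsilon4,d;\Lambda^{\rm all})+1\big)}{(\varepsilon/4)^{-s}+d^{t}}.
\end{equation*}
Because $\varepsilon^{-1}+d\to\infty$ forces $(\varepsilon/4)^{-1}+d\to\infty$, the worst case hypothesis applied along the rescaled argument drives the right-hand side to $0$. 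Combining the two estimates yields $\lim_{\varepsilon^{-1}+d\to\infty}\ln n^{\rm ran,\star}(\varepsilon,d;\Lambda^{\rm std})/(\varepsilon^{-s}+d^{t})=0$, i.e.\ ALG-$(s,t)$-WT for $\Lambda^{\rm std}$.

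The heavy lifting has already been carried out in Theorem 2.2 and the ensuing inequality \eqref{2.17}, so the argument here is essentially bookkeeping, and I expect no conceptual obstacle. The only points requiring care are the passage from $n^{\rm wor}$ to $n^{\rm wor}+1$ inside the logarithm and the mismatch that the numerator of \eqref{2.17} is evaluated at $\varepsilon/4$ while the target denominator is written in $\varepsilon$. The former is harmless because $\ln(N+1)\le\ln N+1$ for $N\ge1$ and $\ln(N+1)=0$ for $N=0$, so the shift contributes only a vanishing additive term; the latter is neutralised precisely by the comparison $(\varepsilon/4)^{-s}+d^{t}\le 4^{s}(\varepsilon^{-s}+d^{t})$, which is where the fixed $s,t>0$ enter. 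Unlike the SPT and QPT cases there is no exponent to track, so any fixed $\omega$ suffices and one need not let $\omega\to0$; moreover, running the identical argument simultaneously for all $s,t>0$ recovers the stated ALG-UWT equivalence as well.
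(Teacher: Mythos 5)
Your proposal is correct and follows essentially the same route as the paper's proof of this theorem: the same reduction via \eqref{2.4-1} and Corollary 2.1 to the single implication from the worst case setting for $\Lambda^{\rm all}$, followed by taking logarithms in \eqref{2.17} and absorbing the rescaling $\varepsilon\mapsto\varepsilon/4$ through the comparison $(\varepsilon/4)^{-s}+d^{t}\le 4^{s}(\varepsilon^{-s}+d^{t})$. The only cosmetic difference is bookkeeping of the $+1$ inside the logarithm (the paper folds it into the constant as $\ln(C_\omega 2^{1+\omega})$, you use $\ln(N+1)\le \ln N+1$), which changes nothing.
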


\begin{proof}
Again
 it is enough to prove that ${\rm ALG}$-$(s,t)$-${\rm WT}$ for
$\Lambda^{\rm all}$ in the worst case setting implies ${\rm ALG}$-$(s,t)$-${\rm WT}$ for
$\Lambda^{\rm std}$ in the randomized setting.

Suppose that ${\rm ALG}$-$(s,t)$-${\rm WT}$ holds for
$\Lambda^{\rm all}$ in the worst case setting. Then  we have  for
$\star\in\{{\rm ABS,\,NOR}\}$,
\begin{equation}
\lim_{\varepsilon^{-1}+d\rightarrow\infty}\frac{\ln n^{\rm
wor,\star}(\varepsilon,d;\Lambda^{\rm
all})}{\varepsilon^{-s}+d^{t}}=0.\label{4.10}
\end{equation}
It follows from \eqref{2.17}  that for $\oz>0$,
\begin{align*}
\frac{\ln n^{\rm ran,\star}(\varepsilon,d;\Lambda^{\rm
std})}{\varepsilon^{-s}+d^{t}} &\leq\frac{\ln
\Big(C_\oz\big(n^{\rm ran,\star}(\varepsilon/4,d;\Lambda^{\rm
all})+1\big)^{1+\oz}\Big)}{\varepsilon^{-s}+d^{t}}\\ &\le
\frac{\ln
(C_\oz2^{1+\oz})}{\varepsilon^{-s}+d^{t}}+\frac{4^s(1+\oz)\,\ln
n^{\rm wor,\star}(\varepsilon/4,d;\Lambda^{\rm
all})}{(\varepsilon/4)^{-s}+d^{t}}.
\end{align*}
Since $\varepsilon^{-1}+d\rightarrow\infty$ is equivalent to
$\vz^{-s}+d^t\to \infty$,  by \eqref{4.10} we get that
$$\lim\limits_{\varepsilon^{-1}+d\rightarrow\infty}\frac{\ln (C_\oz2^{1+\oz})}{\varepsilon^{-s}+d^{t}}=0\ \ \ {\rm and} \ \
\lim_{\varepsilon^{-1}+d\rightarrow\infty}\frac{\ln n^{\rm
wor,\star}(\varepsilon/4,d;\Lambda^{\rm
all})}{(\varepsilon/4)^{-s}+d^{t}}=0.$$We  obtain
$$\lim\limits_{\varepsilon^{-1}+d\rightarrow\infty} \frac{\ln n^{\rm ran,\star}(\varepsilon,d;\Lambda^{\rm
std})}{\varepsilon^{-s}+d^{t}}=0,$$ which implies that
 ${\rm ALG}$-$(s,t)$-${\rm WT}$ holds for
$\Lambda^{\rm std}$ in the randomized setting. This completes the
proof of
 Theorem 4.4.
\end{proof}

\begin{thm}
We consider the problem $\rm APP=\{APP_d\}_{d\in\mathbb{N}}$ in
the randomized setting for the absolute or normalized error
criterion.  Then, ${\rm ALG}$-${\rm UWT}$ for $\Lambda^{\rm all}$
is equivalent to ${\rm ALG}$-${\rm UWT}$
 for $\Lambda^{\rm std}$.
\end{thm}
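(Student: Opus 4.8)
The plan is to deduce the result from Theorem 4.4 by observing that ALG-UWT is merely the conjunction, over all exponents $\alpha,\beta>0$, of the ALG-$(\alpha,\beta)$-WT conditions. Reading off the definitions in Subsection 2.3, ${\rm APP}$ is ALG-UWT for a class $\Lz$ (in the randomized setting, for a fixed error criterion $\star\in\{{\rm ABS,NOR}\}$) precisely when
$$\lim_{\varepsilon^{-1}+d\rightarrow\infty}\frac{\ln n^{\rm ran,\star}(\varepsilon,d;\Lz)}{\varepsilon^{-\alpha}+d^{\beta}}=0\qquad\text{for every }\alpha,\beta>0,$$
and for each fixed pair $(\alpha,\beta)$ this limit condition is exactly ALG-$(\alpha,\beta)$-WT for $\Lz$. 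Thus ``ALG-UWT for $\Lambda^{\rm all}$'' is the same as ``ALG-$(\alpha,\beta)$-WT for $\Lambda^{\rm all}$ holds for all $\alpha,\beta>0$'', and analogously for $\Lambda^{\rm std}$.

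With this identification the argument is immediate. Theorem 4.4 furnishes, for every fixed $\alpha,\beta>0$, the two-sided equivalence of ALG-$(\alpha,\beta)$-WT for $\Lambda^{\rm all}$ with ALG-$(\alpha,\beta)$-WT for $\Lambda^{\rm std}$. Since the equivalence holds separately for each $(\alpha,\beta)$, the two universally quantified statements are equivalent, and therefore ALG-UWT for $\Lambda^{\rm all}$ is equivalent to ALG-UWT for $\Lambda^{\rm std}$. The only point needing care, and the one place an argument of this shape can go wrong, is the logical step $\big(\forall(\alpha,\beta):P_{\alpha,\beta}\Leftrightarrow Q_{\alpha,\beta}\big)\Rightarrow\big(\big(\forall(\alpha,\beta)\,P_{\alpha,\beta}\big)\Leftrightarrow\big(\forall(\alpha,\beta)\,Q_{\alpha,\beta}\big)\big)$; this is valid here precisely because Theorem 4.4 provides the equivalence uniformly in $(\alpha,\beta)$, rather than only for a distinguished pair.

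Beyond this, there is no genuine obstacle, since all the analytic content, namely the information-complexity inequality \eqref{2.17} and the transfer between the worst case and randomized settings for $\Lambda^{\rm all}$ furnished by Corollary 2.1, was already exploited in proving Theorem 4.4. Should a self-contained argument be preferred, I would simply repeat the estimate from the proof of Theorem 4.4: starting from \eqref{2.17}, I would bound
$$\frac{\ln n^{\rm ran,\star}(\varepsilon,d;\Lambda^{\rm std})}{\varepsilon^{-\alpha}+d^{\beta}}\leq\frac{\ln(C_\oz 2^{1+\oz})}{\varepsilon^{-\alpha}+d^{\beta}}+\frac{4^{\alpha}(1+\oz)\,\ln n^{\rm wor,\star}(\varepsilon/4,d;\Lambda^{\rm all})}{(\varepsilon/4)^{-\alpha}+d^{\beta}},$$
and then let $\varepsilon^{-1}+d\to\infty$ for each fixed $\alpha,\beta>0$: the first summand vanishes trivially while the second vanishes by the worst-case ALG-UWT hypothesis together with Corollary 2.1. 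Routing through Theorem 4.4 is the cleaner presentation, so that is the route I would take.
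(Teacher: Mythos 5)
Your proposal is correct and coincides with the paper's own proof: the paper likewise notes that by definition ALG-UWT is equivalent to ALG-$(s,t)$-WT holding for all $s,t>0$, and then invokes Theorem 4.4 for each fixed pair of exponents. Your added remark about the uniformity of the quantifier exchange and the optional self-contained estimate are fine but not needed beyond what the paper records.
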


\begin{proof}
By definition we know  that ${\rm APP}$ is ALG-UWT if and only if
 ${\rm APP}$ is ALG-$(s,t)$-WT for all $s,t>0$. Then Theorem 4.5 follows from Theorem 4.4 immediately.
\end{proof}

\noindent{\it Proof of Theorem 2.5.} \

Theorem 2.5 follows from Theorems 4.1-4.3 immediately.
$\hfill\Box$

\section{Equivalence results of exponential  tractability}

First we consider exponential convergence. Assume that there exist
two constants $A\ge1$ and $q\in(0,1)$ such that
\begin{equation}\label{5.01}e^{\rm
wor}(n,d;\Lambda^{\rm all})\le
Aq^{n+1}\sqrt{\lz_{1,d}}\,.\end{equation} Novak and Wo\'zniakowski
proved in \cite[Theorem 22.18]{NW3} that there exist two constants
$C_1\ge1$ and $q_1\in (q,1)$ independent of $d$ and $n$ such that
\begin{equation}\label{5.02} e^{\rm
ran} (n,d;\Lambda^{\rm std})\le C_1 A\, q_1^{\sqrt
{n}}\,\sqrt{\lz_{1,d}}\,.\end{equation} If  $A, q$ in \eqref{5.01}
are independent of $d$, then $$n^{\rm
wor,NOR}(\varepsilon,d;\Lambda^{\rm all})\leq
C_2(\ln\varepsilon^{-1}+1),$$ and $$n^{\rm
ran,NOR}(\varepsilon,d;\Lambda^{\rm std})\leq
C_3(\ln\varepsilon^{-1}+1)^2.$$  Novak and Wo\'zniakowski posed
Open Problem 100 which states

 (1) Verify if the upper bound in \eqref{5.02} can be
improved.

 (2) Find the smallest $p$ for which there holds
$$n^{\rm
ran,NOR}(\varepsilon,d;\Lambda^{\rm std})\leq
C_4(\ln\varepsilon^{-1}+1)^p.$$ We know that $p\le 2$, and if
\eqref{5.01} is sharp then $p\ge1$.

The following theorem  gives a confirmative solution to  Open
Problem 100 (1). We improve enormously the upper bound $q_1^{\sqrt
n}$ in \eqref{5.02} to $q_2^{\frac{n}{\ln(4n)}}$ in \eqref{5.05},
where $q_1,q_2\in(q,1)$.

\begin{thm} Let $m,n\in\Bbb N$ and \begin{equation}\label{5.03}
m=\Big\lfloor\frac{n}{48\sqrt{2}\ln(4n)}\Big\rfloor.\end{equation}
Then we have \begin{equation}\label{5.04}e^{\rm ran}(n,d;\Lz^{\rm
std})\leq 4e^{\rm wor}(m,d;\Lz^{\rm all}).\end{equation}
 Specifically, if \eqref{5.01} holds, then we have
  \begin{equation}\label{5.05}e^{\rm ran}(n,d;\Lz^{\rm
std})\leq 4 A q_2^{\frac{n}{\ln(4n)}} \sqrt{\lz_{1,d}}\,
,\end{equation}where $q_2=q^{\frac1{48\sqrt2}}\in(q,1)$.
\end{thm}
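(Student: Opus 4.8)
The plan is to read off \eqref{5.04} directly from Theorem 2.2 and then feed the exponential decay hypothesis \eqref{5.01} into it. For the first inequality I would apply Theorem 2.2 with the particular choice $\delta = 2^{-\sqrt 2}$. With this $\delta$ one has $-\ln\delta = \sqrt 2\ln 2$, so that $48(\sqrt 2\ln(2n)-\ln\delta) = 48\sqrt 2\ln(4n)$, and hence the integer $m$ produced by Theorem 2.2 is exactly the one appearing in \eqref{5.03}. The constant coming out of \eqref{2.12} is then $(1+4m/n)^{1/2}(1-\delta)^{-1/2}$; since $4m/n \le 1/(12\sqrt 2\ln(4n)) \le 1$, this constant is at most $\sqrt 2\,(2^{\sqrt 2}/(2^{\sqrt 2}-1))^{1/2} < 4$, which yields \eqref{5.04}. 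This is precisely the bound \eqref{4.01} already established inside the proof of Theorem 2.3, so in fact this step can simply be cited rather than reproved.

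For the second inequality I would substitute the hypothesis \eqref{5.01}, namely $e^{\rm wor}(m,d;\Lz^{\rm all}) \le A\,q^{m+1}\sqrt{\lz_{1,d}}$, into \eqref{5.04} to obtain $e^{\rm ran}(n,d;\Lz^{\rm std}) \le 4A\,q^{m+1}\sqrt{\lz_{1,d}}$. The only real content is bounding $q^{m+1}$ from above. Since $m = \lfloor n/(48\sqrt 2\ln(4n))\rfloor$, the defining property of the floor gives $m+1 > n/(48\sqrt 2\ln(4n))$, and because $q\in(0,1)$ the map $t\mapsto q^t$ is decreasing, so $q^{m+1} < q^{\,n/(48\sqrt 2\ln(4n))} = (q^{1/(48\sqrt 2)})^{\,n/\ln(4n)} = q_2^{\,n/\ln(4n)}$ with $q_2 := q^{1/(48\sqrt 2)}$. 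Combining the two displays gives \eqref{5.05}. The assertion $q_2\in(q,1)$ is then immediate: the exponent $1/(48\sqrt 2)$ lies in $(0,1)$, so raising $q\in(0,1)$ to it strictly increases the value while keeping it below $1$.

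Since every ingredient is already in hand, I do not expect a genuine obstacle; this theorem is essentially a corollary of Theorem 2.2 specialized to geometrically decaying worst-case errors. The single point that deserves care is the direction of the estimate $q^{m+1} < q^{\,n/(48\sqrt 2\ln(4n))}$: the exponent on the left is the \emph{larger} of the two, and it is exactly the condition $q<1$ that makes enlarging the exponent shrink the power. This is what converts the crude floor bound $m+1 > n/(48\sqrt 2\ln(4n))$ into the clean decay rate $q_2^{\,n/\ln(4n)}$, and hence into the claimed improvement of \eqref{5.02}.
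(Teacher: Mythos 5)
Your proposal is correct and takes essentially the same route as the paper: the paper proves \eqref{5.04} by simply citing \eqref{4.01} (which is Theorem 2.2 with $\delta=2^{-\sqrt 2}$, exactly your specialization), and then obtains \eqref{5.05} by inserting \eqref{5.01} and using the floor bound $q^{m+1}\le q^{n/(48\sqrt{2}\ln(4n))}=q_2^{n/\ln(4n)}$, just as you do.
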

\begin{proof} Inequality \eqref{5.04} is just \eqref{4.01}, which
has been  proved. If \eqref{5.01} holds, then by \eqref{5.03} and
\eqref{5.04} we get
$$ e^{\rm ran}(n,d;\Lz^{\rm
std})\leq 4 A\,
q^{\big\lfloor\frac{n}{48\sqrt{2}\ln(4n)}\big\rfloor+1}
\sqrt{\lz_{1,d}}\,\le 4 A q^{\frac{n}{48\sqrt{2}\ln(4n)}}
\sqrt{\lz_{1,d}}=4 A q_2^{\frac{n}{\ln(4n)}} \sqrt{\lz_{1,d}}.$$
This completes the proof of Theorem 5.1.
\end{proof}

Now  we   consider the equivalences of various notions of
exponential tractability  for $\Lambda^{\rm std}$ and
$\Lambda^{\rm all}$ in the randomized setting.   As far as we
know, there is hardly any result for these  equivalences.

First we   consider the equivalences of  EXP-PT and EXP-SPT for
$\Lambda^{\rm std}$ and $\Lambda^{\rm all}$ in the randomized
setting.
 We obtain the following  equivalent
results of ALG-PT and  ALG-SPT  without any condition.

\begin{thm}
We consider the problem $\rm APP=\{APP_d\}_{d\in\mathbb{N}}$ in
the randomized setting for the absolute or normalized error
criterion.  Then,

$\bullet$   ${\rm EXP}$-${\rm PT}$ for $\Lambda^{\rm all}$  is
equivalent to ${\rm EXP}$-${\rm PT}$
 for $\Lambda^{\rm std}$ .

$\bullet$  ${\rm EXP}$-${\rm SPT}$  for $\Lambda^{\rm all}$ is
equivalent to ${\rm EXP}$-${\rm SPT}$  for $\Lambda^{\rm std}$. In
this case,  the exponents of ${\rm EXP}$-${\rm SPT}$ for
$\Lambda^{\rm all}$ and  $\Lambda^{\rm std}$ are the same.
\end{thm}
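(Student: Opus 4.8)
The plan is to reuse, essentially verbatim, the template of Theorem 4.1, since the exponential notions are obtained from the algebraic ones by the single replacement of $\varepsilon^{-1}$ with $(1+\ln\varepsilon^{-1})$, and the transfer inequality \eqref{2.17} already has exactly the shape needed for this substitution. First I would record the two easy directions. By \eqref{2.4-1} we have $n^{\rm ran,\star}(\varepsilon,d;\Lz^{\rm all})\le n^{\rm ran,\star}(\varepsilon,d;\Lz^{\rm std})$, so EXP-PT (resp. EXP-SPT) for $\Lz^{\rm std}$ forces the same for $\Lz^{\rm all}$; and by Corollary 2.1, EXP-PT (EXP-SPT) for $\Lz^{\rm all}$ in the randomized setting is equivalent to EXP-PT (EXP-SPT) for $\Lz^{\rm all}$ in the worst case setting. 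Hence it suffices to show that EXP-PT (EXP-SPT) for $\Lz^{\rm all}$ in the worst case setting implies EXP-PT (EXP-SPT) for $\Lz^{\rm std}$ in the randomized setting. All three cited facts hold uniformly for both $\star\in\{{\rm ABS,NOR}\}$, so one argument covers both error criteria at once.

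For the forward implication I would start from the worst-case bound $n^{\rm wor,\star}(\varepsilon,d;\Lz^{\rm all})\le Cd^{q}(1+\ln\varepsilon^{-1})^{p}$, feed $\varepsilon/4$ into it, and control the resulting shift: since $1+\ln\varepsilon^{-1}\ge1$ one has $\ln 4+(1+\ln\varepsilon^{-1})\le(1+\ln 4)(1+\ln\varepsilon^{-1})$, so the argument rescaling costs only the multiplicative constant $(1+\ln 4)^{p}$ and I retain the clean form $n^{\rm wor,\star}(\varepsilon/4,d;\Lz^{\rm all})+1\le C'd^{q}(1+\ln\varepsilon^{-1})^{p}$ with $C'=C(1+\ln 4)^{p}+1$. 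Plugging this into \eqref{2.17} and raising to the power $1+\oz$ yields $n^{\rm ran,\star}(\varepsilon,d;\Lz^{\rm std})\le C_\oz (C')^{1+\oz}d^{q(1+\oz)}(1+\ln\varepsilon^{-1})^{p(1+\oz)}$, which is EXP-PT for $\Lz^{\rm std}$; setting $q=0$ throughout gives EXP-SPT for $\Lz^{\rm std}$.

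For the exponent statement of EXP-SPT I would let $\oz\to0^{+}$ in the last display: the exponent $p(1+\oz)$ of $(1+\ln\varepsilon^{-1})$ tends to $p$, whence ${\rm EXP}\!-\!p^{\rm ran,\star}(\Lz^{\rm std})\le{\rm EXP}\!-\!p^{\rm wor,\star}(\Lz^{\rm all})$. Combining this with the equality ${\rm EXP}\!-\!p^{\rm wor,\star}(\Lz^{\rm all})={\rm EXP}\!-\!p^{\rm ran,\star}(\Lz^{\rm all})$ supplied by Corollary 2.1 and the trivial bound ${\rm EXP}\!-\!p^{\rm ran,\star}(\Lz^{\rm all})\le{\rm EXP}\!-\!p^{\rm ran,\star}(\Lz^{\rm std})$ coming from \eqref{2.4-1} closes the chain and forces all three exponents to coincide.

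Finally, on where the difficulty sits: essentially none of it lies in this theorem. All the analytic content — the weighted least squares construction, the concentration estimate of Lemma 3.1, and above all the derivation of \eqref{2.17} via Theorem 2.3 — has already been carried out, and \eqref{2.17} was engineered precisely so that a polynomial bound in $d$ and in any prescribed complexity variable (here $1+\ln\varepsilon^{-1}$) is preserved up to the harmless factor $(1+\oz)$ in the exponents. The only step requiring the slightest care is the absorption of the $\ln 4$ shift into a multiplicative constant inside the exponential bound; I would double-check that this shift stays multiplicative on $(1+\ln\varepsilon^{-1})^{p}$ rather than degrading the power, which is exactly the elementary inequality $\ln 4+(1+\ln\varepsilon^{-1})\le(1+\ln 4)(1+\ln\varepsilon^{-1})$ noted above.
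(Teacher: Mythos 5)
Your proposal is correct and follows essentially the same route as the paper's proof of this theorem: reduce via \eqref{2.4-1} and Corollary 2.1 to showing that worst-case EXP-PT (EXP-SPT) for $\Lambda^{\rm all}$ implies randomized EXP-PT (EXP-SPT) for $\Lambda^{\rm std}$, apply \eqref{2.17} with the bound $\ln 4+(1+\ln\varepsilon^{-1})\le(1+\ln 4)(1+\ln\varepsilon^{-1})$ so the shift costs only a multiplicative constant, and let $\oz\to 0^{+}$ to match the EXP-SPT exponents. The paper's argument is the same computation with the constants organized slightly differently.
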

\begin{proof}
Again, it is enough to prove that EXP-PT for $\Lambda^{\rm all}$
in the worst case setting implies EXP-PT for $\Lambda^{\rm std}$
in the randomized setting.

Suppose that EXP-PT  holds for $\Lambda^{\rm all}$ in the worst
case setting. Then there exist $C\ge1 $ and non-negative $ p,q$,
for $\star\in\{{\rm ABS,\,NOR}\}$ such that
\begin{equation}\label{5.1}n^{\rm
wor,\star}(\varepsilon,d;\Lambda^{\rm all})\leq
Cd^{q}(\ln\varepsilon^{-1}+1)^{p},\ \text{for all}\
d\in\mathbb{N},\ \varepsilon\in(0,1).\end{equation}
 It follows from
\eqref{2.17} and \eqref{5.1} that
\begin{align*}
n^{\rm ran, \star}(\varepsilon,d;\Lambda^{\rm
std})&\leq C_\oz \(Cd^{q}(\ln(\frac{\varepsilon}{4})^{-1}+1)^p+1\)^{1+\oz}\\
&\le C_\oz (2C)^{1+\oz} (1+\ln
4)^{p(1+\oz)}d^{q(1+\oz)}(\ln\varepsilon^{-1}+1)^{p(1+\oz)},
\end{align*}which means that EXP-PT  holds for  $\Lambda^{\rm std}$ in
the randomized  setting.

If EXP-SPT holds  for $\Lambda^{\rm all}$ in the worst case
setting, then \eqref{5.1} holds with $q=0$. We obtain
$$ n^{\rm ran, \star}(\varepsilon,d;\Lambda^{\rm
std})\le C_\oz (2C)^{1+\oz} (1+\ln
4)^{p(1+\oz)}(\ln\varepsilon^{-1}+1)^{p(1+\oz)},$$which means that
EXP-SPT  holds for  $\Lambda^{\rm std}$ in the randomized setting.
Furthermore, in this case we have \begin{align*} {\rm
EXP\!-\!}p^{\rm ran, \star}(\Lz^{\rm std})&\le  {\rm
EXP\!-\!}p^{\rm wor, \star}(\Lz^{\rm all})\\  ={\rm
EXP\!-\!}p^{\rm ran, \star}(\Lz^{\rm all})&\le {\rm
EXP\!-\!}p^{\rm ran, \star}(\Lz^{\rm std}),
\end{align*} which means that the exponents of ${\rm EXP}$-${\rm
SPT}$ for $\Lambda^{\rm all}$ and  $\Lambda^{\rm std}$ are the
same. This completes the proof of Theorem 5.2.
\end{proof}

\begin{rem} We remark that if \eqref{5.01} holds with $A, q$ independent of $d$, then the problem {\rm APP} is {\rm
EXP-SPT} for $\Lambda^{\rm all}$ in the randomized setting for the
normalized error criterion, and the exponent ${\rm EXP\!-\!}p^{\rm
wor, NOR}(\Lz^{\rm all})\le 1$. If \eqref{5.01} is sharp, then
${\rm EXP\!-\!}p^{\rm wor, NOR}(\Lz^{\rm all})= 1$.

  Open Problem 100 (2) is equivalent to finding the exponent $ {\rm
EXP\!-\!}p^{\rm ran, NOR}(\Lz^{\rm std})$ of ${\rm EXP}$-${\rm
SPT}$. By Theorem 5.2 we obtain that if \eqref{5.01} holds, then $
{\rm EXP\!-\!}p^{\rm ran, NOR}(\Lz^{\rm std})\le 1,$ and if
\eqref{5.01} is sharp, then $ {\rm EXP\!-\!}p^{\rm ran,
NOR}(\Lz^{\rm std})=1$.

This solves Open Problem 100 (2) as posed by Novak and
Wo\'zniakowski in  \cite{NW3}.
\end{rem}

Next we consider the equivalence of  EXP-QPT for $\Lambda^{\rm
std}$ and $\Lambda^{\rm all}$ in the randomized setting.
 We obtain the following  equivalent
result of EXP-QPT without any condition.

\begin{thm}
We consider the problem $\rm APP=\{APP_d\}_{d\in\mathbb{N}}$ in
the randomized setting for the absolute or normalized error
criterion.
 Then, ${\rm EXP}$-${\rm QPT}$ for $\Lambda^{\rm all}$  is
equivalent to ${\rm EXP}$-${\rm QPT}$
 for $\Lambda^{\rm std}$. In this case,  the exponents
 of ${\rm EXP}$-${\rm QPT}$ for
$\Lambda^{\rm all}$ and  $\Lambda^{\rm std}$ are the same.
\end{thm}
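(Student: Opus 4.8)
The plan is to mirror the proof of Theorem 4.2 in structure, substituting the exponential quantity $1+\ln(\ln\varepsilon^{-1}+1)$ for the algebraic quantity $1+\ln\varepsilon^{-1}$ throughout. First I would reduce to a single implication: by \eqref{2.4-1}, EXP-QPT for $\Lambda^{\rm std}$ immediately gives EXP-QPT for $\Lambda^{\rm all}$, and by Corollary 2.1 the latter is equivalent to EXP-QPT for $\Lambda^{\rm all}$ in the worst case setting. Hence it suffices to prove that EXP-QPT for $\Lambda^{\rm all}$ in the worst case setting implies EXP-QPT for $\Lambda^{\rm std}$ in the randomized setting, and similarly for the exponents.

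For the implication, I would start from the worst case hypothesis
\[
n^{\rm wor,\star}(\varepsilon,d;\Lambda^{\rm all})\le C\exp\big(t(1+\ln d)(1+\ln(\ln\varepsilon^{-1}+1))\big),
\]
feed it into \eqref{2.17} with $\varepsilon$ replaced by $\varepsilon/4$, and use $(X+1)^{1+\oz}\le(2C)^{1+\oz}\exp(\cdots)$ when $X=C\exp(\cdots)\ge1$. The only nonroutine step is to absorb the shift $\varepsilon\mapsto\varepsilon/4$ inside the nested logarithm: writing $\ln(\varepsilon/4)^{-1}+1=\ln\varepsilon^{-1}+1+\ln4$ and using $\ln\varepsilon^{-1}+1\ge1$ gives $\ln\varepsilon^{-1}+1+\ln4\le(1+\ln4)(\ln\varepsilon^{-1}+1)$, whence
\[
1+\ln\big(\ln(\varepsilon/4)^{-1}+1\big)\le\big(1+\ln(1+\ln4)\big)\big(1+\ln(\ln\varepsilon^{-1}+1)\big).
\]
This yields the EXP-QPT bound for $\Lambda^{\rm std}$ with exponent $t^{*}=(1+\oz)(1+\ln(1+\ln4))\,t$.

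The equality of exponents is where I expect the real difficulty. The easy half, ${\rm EXP}\!-\!t^{\rm wor,\star}(\Lambda^{\rm all})={\rm EXP}\!-\!t^{\rm ran,\star}(\Lambda^{\rm all})\le{\rm EXP}\!-\!t^{\rm ran,\star}(\Lambda^{\rm std})$, comes from $\Lambda^{\rm std}\subset\Lambda^{\rm all}$ and Corollary 2.1. For the reverse inequality the factor $1+\ln(1+\ln4)$ produced above is a fixed constant exceeding $1$, so \eqref{2.17} is too lossy and would only give ${\rm EXP}\!-\!t^{\rm ran,\star}(\Lambda^{\rm std})\le(1+\ln(1+\ln4))\,{\rm EXP}\!-\!t^{\rm wor,\star}(\Lambda^{\rm all})$. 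The hard part, exactly as in Theorem 4.2, is to remove this constant, and I would do so by invoking the sharper estimate \eqref{2.18}: replacing $4$ by $A_\delta$ and repeating the logarithmic manipulation produces the exponent factor $(1+\oz)(1+\ln(1+\ln A_\delta))\,t$. Since $A_\delta\to1$ as $\delta\to0$, taking the infimum over admissible $t$ and letting $(\delta,\oz)\to(0,0)$ drives this factor to $1$, giving ${\rm EXP}\!-\!t^{\rm ran,\star}(\Lambda^{\rm std})\le{\rm EXP}\!-\!t^{\rm wor,\star}(\Lambda^{\rm all})$ and completing the proof.
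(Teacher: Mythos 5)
Your proposal is correct and follows essentially the same route as the paper's proof of this theorem: the same reduction to showing that worst-case EXP-QPT for $\Lambda^{\rm all}$ implies randomized EXP-QPT for $\Lambda^{\rm std}$, the same use of \eqref{2.17} (with the nested-logarithm absorption yielding the factor $1+\ln(1+\ln 4)$, equivalent to the paper's bound via $\ln(1+a+b)\le\ln(1+a)+\ln(1+b)$), and the same switch to \eqref{2.18} with $A_\delta\to 1$ to remove the lossy constant and obtain equality of the exponents. The only cosmetic difference is that your diagnosis of what \eqref{2.17} alone gives, namely the factor $1+\ln(1+\ln 4)$, is in fact slightly sharper than the factor $1+\ln 4$ stated in the paper's side remark; this does not affect the argument.
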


\begin{proof}Again,
 it is enough to prove that EXP-QPT for
$\Lambda^{\rm all}$ in the worst case setting implies EXP-QPT for
 $\Lambda^{\rm std}$ in the randomized setting.

Suppose that  EXP-QPT holds for $\Lambda^{\rm all}$ in the worst
case setting. Then  there exist $ C\ge 1$ and non-negative $t$
such that \begin{equation}\label{5.2}n^{\rm wor,\star}(\va
,d;\Lz)\leq C \exp(t(1+\ln{d})(1+\ln(\ln\varepsilon^{-1}+1))),\
\text{for all}\ d\in\mathbb{N},\
\varepsilon\in(0,1).\end{equation} It follows from \eqref{2.17}
and \eqref{5.2} that for $\oz>0$,
\begin{align*} &\quad\  n^{\rm ran,\star}(\varepsilon,d;\Lambda^{\rm std})\\ &\le
C_\oz \( n^{\rm wor,\star}(\varepsilon/4,d;\Lambda^{\rm
all})+1\)^{1+\oz}
\\
&\leq C_\oz\(C
\exp\big(t(1+\ln{d})\big(1+\ln(\ln\varepsilon^{-1}+\ln
4+1))\big)+1\)^{1+\oz}
\\ &\leq C_\oz (2C)^{1+\oz}\exp\big(
(1+\oz)t(1+\ln{d})(1+\ln(\ln4+1)+ \ln(\ln\varepsilon^{-1}+1))\big)\\
&\le C_\oz (2C)^{1+\oz}\exp\big(
t^*(1+\ln{d})(1+\ln(\ln\varepsilon^{-1}+1))\big)
,\end{align*}where $t^*=(1+\oz)(1+\ln(\ln 4+1))t$, in the third
inequality we used the fact $$\ln (1+a+b)\le \ln(1+a)+\ln (1+b),\
\ \ a,b\ge 0.$$ This implies that EXP-QPT holds for
 $\Lambda^{\rm std}$ in the randomized setting.

Next we show that  the exponents EXP-$t^{\rm ran, \star}(\Lz^{\rm
all})$ and EXP-$t^{\rm ran, \star}(\Lz^{\rm std})$ are equal if
EXP-QPT holds for $\Lambda^{\rm all}$ in the worst case setting.
We have
\begin{align*}  {\rm EXP\!-\!}t^{\rm wor, \star}(\Lz^{\rm
all})  ={\rm EXP\!-\!}t^{\rm ran, \star}(\Lz^{\rm all})\le {\rm
EXP\!-\!}t^{\rm ran, \star}(\Lz^{\rm std}).\end{align*} It suffices
to show that
$${\rm
EXP\!-\!}t^{\rm ran, \star}(\Lz^{\rm std})\le {\rm EXP\!-\!}t^{\rm
wor, \star}(\Lz^{\rm all}) .$$ Note that using \eqref{2.17} we can
only obtain that
$${\rm
EXP\!-\!}t^{\rm ran, \star}(\Lz^{\rm std})\le (1+\ln 4)\cdot {\rm
EXP\!-\!}t^{\rm wor, \star}(\Lz^{\rm all}) .$$ Instead we use
\eqref{2.18}. For sufficiently small $\delta>0$ and $\oz>0$,  it
follows from \eqref{2.18} and \eqref{5.2} that
\begin{align*}
&\quad \ n^{\rm ran,\star}(\varepsilon,d;\Lambda^{\rm std})\\ &
\le C_{\oz,\dz}
 \big(n^{\rm
wor,\star}(\frac{\varepsilon}{A_\delta},d;\Lambda^{\rm
all})+1\big)^{1+\oz}\\ &\le C_{\oz,\dz}(2C)^{1+\oz}\exp\big(
(1+\oz)t(1+\ln{d})(1+\ln (\ln A_\dz+1)
+\ln(\ln\varepsilon^{-1}+1))\big)\\&\le
C_{\oz,\dz}(2C)^{1+\oz}\exp\big( (1+\oz)t(1+\ln(\ln
A_\dz+1))(1+\ln{d})(1+\ln(\ln\varepsilon^{-1}+1))\big),
\end{align*}where
$A_{\delta}=\big(1+\frac{1}{12\ln{\frac{1}{\delta}}}\big)^{\frac12}\frac{1}{\sqrt{1-\delta}}$.
Taking the infimum over $t$ for which \eqref{5.2} holds, and
noting that $ \lim\limits_{(\dz,\oz)\to (0,0)}(1+\oz)(1+\ln(\ln
A_\dz+1))=1$,
 we get that
\begin{align*}{\rm EXP\!-\!}t^{\rm ran, \star}(\Lz^{\rm std})\le {\rm EXP\!-\!}t^{\rm
wor, \star}(\Lz^{\rm all}).
\end{align*}This completes the proof of Theorem 5.4.
\end{proof}

Finally, we   consider the equivalences of EXP-$(s, t)$-WT
(including EXP-WT) and EXP-UWT for $\Lambda^{\rm std}$ and
$\Lambda^{\rm all}$ in the randomized setting.  We obtain the
following equivalent results of EXP-$(s, t)$-WT  (including
EXP-WT) and EXP-UWT for the absolute or normalized error criterion
without any condition.

\begin{thm}
We consider the problem $\rm APP=\{APP_d\}_{d\in\mathbb{N}}$ in
the randomized setting for the absolute or normalized error
criterion. Then for
 fixed $s,t>0$, ${\rm EXP}$-$(s,t)$-${\rm WT}$ for $\Lambda^{\rm all}$  is
equivalent to ${\rm EXP}$-$(s,t)$-${\rm WT}$
 for $\Lambda^{\rm std}$. Specifically, ${\rm EXP}$-${\rm WT}$ for $\Lambda^{\rm all}$  is
equivalent to ${\rm EXP}$-${\rm WT}$
 for $\Lambda^{\rm std}$.
\end{thm}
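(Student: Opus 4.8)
The plan is to transcribe the proof of Theorem 4.4 (the algebraic $(s,t)$-weak case), replacing $\varepsilon^{-1}$ by $1+\ln\varepsilon^{-1}$ throughout. As in every preceding equivalence theorem, one direction is automatic: since $\Lambda^{\rm std}\subset\Lambda^{\rm all}$, \eqref{2.4-1} shows that EXP-$(s,t)$-WT for $\Lambda^{\rm std}$ forces EXP-$(s,t)$-WT for $\Lambda^{\rm all}$ in the randomized setting, and Corollary 2.1 identifies the latter with EXP-$(s,t)$-WT for $\Lambda^{\rm all}$ in the worst case setting. Hence it suffices to prove that EXP-$(s,t)$-WT for $\Lambda^{\rm all}$ in the worst case setting implies EXP-$(s,t)$-WT for $\Lambda^{\rm std}$ in the randomized setting.

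Assuming the worst-case hypothesis, for $\star\in\{{\rm ABS,\,NOR}\}$ we have
$$\lim_{\varepsilon^{-1}+d\to\infty}\frac{\ln n^{\rm wor,\star}(\varepsilon,d;\Lambda^{\rm all})}{(1+\ln\varepsilon^{-1})^{s}+d^{t}}=0.$$
I would fix $\omega>0$, apply \eqref{2.17}, and take logarithms to obtain
$$\ln n^{\rm ran,\star}(\varepsilon,d;\Lambda^{\rm std})\le\ln C_\omega+(1+\omega)\ln\big(n^{\rm wor,\star}(\tfrac\varepsilon4,d;\Lambda^{\rm all})+1\big).$$
Dividing by $(1+\ln\varepsilon^{-1})^{s}+d^{t}$ splits the right side into a constant term $\ln C_\omega$ over the denominator, which vanishes because $(1+\ln\varepsilon^{-1})^{s}+d^{t}\to\infty$ exactly when $\varepsilon^{-1}+d\to\infty$, and a main term built from $\ln\big(n^{\rm wor,\star}(\varepsilon/4,d;\Lambda^{\rm all})+1\big)$.

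The only genuine point of difference from the algebraic argument, and the step I expect to need the most care, is the shift $\varepsilon\mapsto\varepsilon/4$ inside the logarithm. In Theorem 4.4 this produced the clean factor $(\varepsilon/4)^{-s}=4^{s}\varepsilon^{-s}$; here I would instead exploit $1+\ln\varepsilon^{-1}\ge1$ to write $1+\ln(\varepsilon/4)^{-1}=1+\ln4+\ln\varepsilon^{-1}\le(1+\ln4)(1+\ln\varepsilon^{-1})$, whence
$$(1+\ln(\tfrac\varepsilon4)^{-1})^{s}+d^{t}\le(1+\ln4)^{s}\big((1+\ln\varepsilon^{-1})^{s}+d^{t}\big).$$
Expressing the main term as
$$\frac{\ln\big(n^{\rm wor,\star}(\varepsilon/4,d;\Lambda^{\rm all})+1\big)}{(1+\ln(\varepsilon/4)^{-1})^{s}+d^{t}}\cdot\frac{(1+\ln(\varepsilon/4)^{-1})^{s}+d^{t}}{(1+\ln\varepsilon^{-1})^{s}+d^{t}},$$
the first factor tends to $0$ by the worst-case hypothesis (absorbing the harmless $+1$ via $\ln(x+1)\le\ln2+\ln x$ for $x\ge1$), while the second factor is bounded by $(1+\ln4)^{s}$. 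Therefore the main term vanishes, yielding
$$\lim_{\varepsilon^{-1}+d\to\infty}\frac{\ln n^{\rm ran,\star}(\varepsilon,d;\Lambda^{\rm std})}{(1+\ln\varepsilon^{-1})^{s}+d^{t}}=0,$$
which is precisely EXP-$(s,t)$-WT for $\Lambda^{\rm std}$ in the randomized setting. The concluding assertion about EXP-WT is then the special case $s=t=1$, since EXP-WT coincides with EXP-$(1,1)$-WT. Apart from the denominator comparison above, the whole argument is a routine transcription of the proof of Theorem 4.4.
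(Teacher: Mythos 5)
Your proposal is correct and follows essentially the same route as the paper's proof: the same reduction via \eqref{2.4-1} and Corollary 2.1 to the implication from the worst case setting for $\Lambda^{\rm all}$ to the randomized setting for $\Lambda^{\rm std}$, the same application of \eqref{2.17} with the split into a vanishing constant term and a main term, and the same key denominator comparison $1+\ln(\varepsilon/4)^{-1}\le(1+\ln 4)(1+\ln\varepsilon^{-1})$ producing the harmless factor $(1+\ln 4)^{s}$. The paper merely writes the main-term bound in one line rather than as a product of two factors, and likewise concludes the EXP-WT case as EXP-$(1,1)$-WT.
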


\begin{proof}
Again,
 it is enough to prove that ${\rm EXP}$-$(s,t)$-${\rm WT}$ for
$\Lambda^{\rm all}$ in the worst case setting implies ${\rm EXP}$-$(s,t)$-${\rm WT}$ for
$\Lambda^{\rm std}$ in the randomized setting.

Suppose that ${\rm EXP}$-$(s,t)$-${\rm WT}$ holds for
$\Lambda^{\rm all}$ in the worst case setting. Then  we have  for
$\star\in\{{\rm ABS,\,NOR}\}$,
\begin{equation}
\label{5.3}\lim_{\varepsilon^{-1}+d\rightarrow\infty}\frac{\ln
n^{\rm wor,\star}(\varepsilon,d;\Lambda^{\rm
all})}{(1+\ln\varepsilon^{-1})^{s}+d^{t}}=0.
\end{equation}
It follows from \eqref{2.17}  that for $\oz>0$,
\begin{align*}
&\quad\ \frac{\ln n^{\rm ran,\star}(\varepsilon,d;\Lambda^{\rm
std})}{(1+\ln\varepsilon^{-1})^{s}+d^{t}} \leq\frac{\ln
\Big(C_\oz\big(n^{\rm ran,\star}(\varepsilon/4,d;\Lambda^{\rm
all})+1\big)^{1+\oz}\Big)}{(1+\ln\varepsilon^{-1})^{s}+d^{t}}\\
&\le \frac{\ln
(C_\oz2^{1+\oz})}{(1+\ln\varepsilon^{-1})^{s}+d^{t}}+\frac{(1+\ln
4)^s(1+\oz)\,\ln n^{\rm wor,\star}(\varepsilon/4,d;\Lambda^{\rm
all})}{(1+\ln(\varepsilon/4)^{-1})^{s}+d^{t}}.
\end{align*}
Since $\varepsilon^{-1}+d\rightarrow\infty$ is equivalent to
$(1+\ln\varepsilon^{-1})^{s}+d^{t}\to \infty$,  by \eqref{5.3} we
get that
$$\lim\limits_{\varepsilon^{-1}+d\rightarrow\infty}\frac{\ln (C_\oz2^{1+\oz})}{(1+\ln\varepsilon^{-1})^{s}+d^{t}}=0\ \ \ {\rm and} \ \
\lim_{\varepsilon^{-1}+d\rightarrow\infty}\frac{\ln n^{\rm
wor,\star}(\varepsilon/4,d;\Lambda^{\rm
all})}{(1+\ln(\varepsilon/4)^{-1})^{s}+d^{t}}=0.$$We  obtain
$$\lim\limits_{\varepsilon^{-1}+d\rightarrow\infty} \frac{\ln n^{\rm ran,\star}(\varepsilon,d;\Lambda^{\rm
std})}{(\ln\varepsilon^{-1})^{s}+d^{t}}=0,$$ which implies that
 ${\rm EXP}$-$(s,t)$-${\rm WT}$ holds for
$\Lambda^{\rm std}$ in the randomized setting.

Specifically, EXP-WT is just  ${\rm EXP}$-$(s,t)$-${\rm WT}$ with
$s=t=1$.

This completes the proof of
 Theorem 5.5.
\end{proof}

\begin{thm}
We consider the problem $\rm APP=\{APP_d\}_{d\in\mathbb{N}}$ in
the randomized setting for the absolute or normalized error
criterion.  Then, ${\rm EXP}$-${\rm UWT}$ for $\Lambda^{\rm all}$
is equivalent to ${\rm EXP}$-${\rm UWT}$
 for $\Lambda^{\rm std}$.
\end{thm}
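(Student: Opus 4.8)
The plan is to reduce Theorem 5.6 to Theorem 5.5, in exactly the way Theorem 4.5 was reduced to Theorem 4.4 in the algebraic case. First I would invoke the definitional characterization of uniform weak tractability: by inspecting the defining limits, for a fixed class $\Lz\in\{\Lz^{\rm all},\Lz^{\rm std}\}$ and a fixed error criterion $\star$, the problem ${\rm APP}$ is EXP-UWT if and only if it is EXP-$(s,t)$-WT for \emph{every} pair $s,t>0$. This is immediate, since the EXP-UWT condition
$$\lim_{\varepsilon^{-1}+d\rightarrow\infty}\frac{\ln n^{{\rm ran},\star}(\va,d;\Lz)}{(1+\ln\varepsilon^{-1})^{\alpha}+d^{\beta}}=0$$
is required to hold for all $\alpha,\beta>0$, which is precisely the family of EXP-$(s,t)$-WT conditions indexed by $(s,t)=(\alpha,\beta)$.

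With this characterization the equivalence follows at once. Suppose ${\rm APP}$ is EXP-UWT for $\Lz^{\rm all}$. Then it is EXP-$(s,t)$-WT for $\Lz^{\rm all}$ for every $s,t>0$, so by Theorem 5.5 it is EXP-$(s,t)$-WT for $\Lz^{\rm std}$ for every $s,t>0$, and hence EXP-UWT for $\Lz^{\rm std}$. The converse is identical, applying the other direction of Theorem 5.5 (or, more cheaply, using $\Lz^{\rm std}\subset\Lz^{\rm all}$ together with \eqref{2.4-1}). Since none of this argument refers to the specific error criterion, it works verbatim for both $\star={\rm ABS}$ and $\star={\rm NOR}$.

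There is essentially no obstacle to overcome here: all of the analytic content, namely the chain of estimates built from the complexity bound \eqref{2.17}, has already been absorbed into Theorem 5.5. The only point I would check is that the universal quantifier over $(s,t)$ is genuinely preserved, i.e. that Theorem 5.5 is an equivalence for each \emph{fixed} $(s,t)$ rather than merely for some $(s,t)$; this is exactly what its statement supplies, so the quantifier passes through cleanly and the proof is a one-line deduction.
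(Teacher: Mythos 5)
Your proof is correct and matches the paper's own argument exactly: both reduce EXP-UWT to the family of EXP-$(s,t)$-WT conditions over all $s,t>0$ and then invoke Theorem 5.5 for each fixed pair. Nothing is missing; the quantifier check you mention is precisely why the one-line deduction goes through.
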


\begin{proof}
By definition we know  that ${\rm APP}$ is EXP-UWT if and only if
 ${\rm APP}$ is EXP-$(s,t)$-WT for all $s,t>0$. Then Theorem 5.6 follows from Theorem 5.5 immediately.
\end{proof}

\noindent{\it Proof of Theorem 2.6.}  \

Theorem 2.6 follows from Theorems 4.4, 4.5, 5.2, and 5.4-5.6
immediately.  $\hfill\Box$

\

 \noindent{\bf Acknowledgment}  This work was supported by the
National Natural Science Foundation of China (Project no.
11671271).

\end{document}